\documentclass[12pt]{article}
\usepackage{latexcad}
\input{amssym}

\textwidth = 16 cm \textheight = 23 cm \oddsidemargin = 0 cm
\evensidemargin = 0 cm \topmargin = -1 cm \topmargin = 0 cm
\parskip = 2.5 mm


%
\newtheorem{prethm}{{\bf Theorem}}

\newenvironment{thm}{\begin{prethm}{\hspace{-0.5
em}{\bf.}}}{\end{prethm}}
\newtheorem{precor}{{\bf Corollary}}

\newenvironment{cor}{\begin{precor}{\hspace{-0.5
em}{\bf.}}}{\end{precor}}
\newtheorem{preprop}{{\bf Proposition}}

\newenvironment{prop}{\begin{preprop}{\hspace{-0.5
em}{\bf.}}}{\end{preprop}}
\newtheorem{preque}{{\bf Question}}

\newenvironment{que}{\begin{preque}{\hspace{-0.5
em}{\bf.}}}{\end{preque}}
\newtheorem{preques}{{\bf Question}}

\newtheorem{prelemma}{{\bf Lemma}}

\newtheorem{prefact}{{\bf Fact}}

\newtheorem{preobs}{{\bf Observation}}

\newenvironment{obs}{\begin{preobs}{\hspace{-0.5
em}}}{\end{preobs}}
\newtheorem{prefig}{{\bf Figure}}

\newtheorem{prelemm}{{\bf Lemma}}

\newtheorem{preex}{{\bf Example}}

\newtheorem{prepro}{{\bf Proposition}}

\newtheorem{prelem}{{\bf Theorem}}

\newenvironment{lem}{\begin{prelem}{\hspace{-0.5
em}{\bf.}}}{\end{prelem}}
\newtheorem{preproof}{{\bf Proof.}}

\newenvironment{proof}[1]{\begin{preproof}{\rm
               #1}\hfill{$\rule{2mm}{2mm}$}}{\end{preproof}}

\newtheorem{preconj}{{\bf Conjecture}}

\newenvironment{conj}{\begin{preconj}{\hspace{-0.5
em}{\bf.}}}{\end{preconj}}
\newtheorem{predeff}{{\bf Definition}}

\newenvironment{deff}{\begin{predeff}{\hspace{-0.5
em}{\bf.}}}{\end{predeff}}
%
\def\n#1{\vbox to 3mm{\vspace{1mm}\vfill \hbox to 2.0mm{\hfill
             $#1$\hfill} \vfill }}
\def\m#1#2{\raise 0.2ex\hbox{
    ${#1_{\displaystyle #2}}$}}
\def\x#1{\raise 0.5ex\hbox{
    ${#1}$}}
\def\arraystretch{1.0}                 

\def\m#1#2#3{\raise .8ex\hbox{
     ${#1_{{\bf \displaystyle #2}_{\displaystyle #3}}}$}}

\usepackage{array}

\def\newpic#1{}
\date{}

\begin{document}

\title{
{\Large{\bf On the simultaneous edge coloring of graphs}}}
%

{\small
\author{
{\sc Behrooz Bagheri Gh.} and
{\sc Behnaz Omoomi}\\
[1mm]
{\small \it  Department of Mathematical Sciences}\\
{\small \it  Isfahan University of Technology} \\
{\small \it 84156-83111, Isfahan, Iran}}


 \maketitle \baselineskip15truept


\begin{abstract}
 A {\sf $\mu$--simultaneous edge coloring} of graph $G$ is a set of $\mu$ proper edge colorings  of
$G$ with a same color set  such that for each vertex, the sets of colors appearing on the edges incident to that vertex are the same
in each coloring and
no edge receives the same color in any two colorings.
The $\mu$--simultaneous edge coloring of bipartite graphs has a close relation with $\mu$--way Latin trades.
  Mahdian $\rm et\ al.$ (2000)  conjectured that every bridgeless bipartite  graph is $2$--simultaneous edge colorable.
Luo $\rm et\ al.$ (2004) showed that  every bipartite graphic sequence $S$ with all its elements greater than one, has a
realization that admits a $2$--simultaneous edge coloring.
In this paper, the $\mu$--simultaneous edge coloring of graphs is studied. Moreover, the properties of the extermal counterexample
to the above conjecture are investigated. Also, a relation between
$2$--simultaneous edge coloring of a graph with a cycle double cover with certain properties is shown and using this relation, some results about  $2$--simultaneous edge colorable graphs are obtained.
\end{abstract}

{\bf Keywords:}   Simultaneous edge coloring; Cycle double cover; Oriented cycle double cover; Latin trades.
\section{\large Introduction}

 In this paper all graphs we consider are finite and simple. For notations and definitions  we refer to~\cite{Bondy}.
This section deals with a brief review of  some concepts related to the main subject of the paper.

 Let $S$ be a nonempty proper subset of $V(G)$. The subset $[S,\overline{S}]=\{uv\in E(G) : u\in S, v\in \overline{S}\}$ of $E(G)$ is called an {\sf edge cut}.  A {\sf $k$-edge cut} is an
  edge cut $[S,\overline{S}]$, where $|[S,\overline{S}]|=k$.  An edge cut  $F$, is called {\sf trivial} if one of the component in $G\setminus F$ be an isolated vertex.
The {\sf edge connectivity} of $G$, $\kappa'(G)$, is the minimum $k$ for which  $G$ has a $k$-edge cut and $G$ is said to be {\sf $k$--edge-connected} if $\kappa'(G)\ge k$. A $2$--edge-connected graph is called a {\sf bridgeless} graph.

A {\sf proper edge coloring} of a  graph $G$ is a labeling from $E(G)$ to the color set $[l]=\{1,\ldots,l\}$ such that incident edges have different colors.
The {\sf edge chromatic number} of $G$, $\chi'(G)$, is the least $l$ such that $G$ admits a proper edge coloring with label set $[l]$.
 A {\sf $k$-factor} of graph $G$ is a $k$-regular spanning subgraph of $G$, and $G$ is  {\sf $k$-factorable} if there are edge disjoint
$k$-factors $H_1,\ldots,H_l$ such that $G=H_1\cup\ldots \cup H_l$. Note that an $r$-regular graph $G$ is $1$-factorable if and only if
$\chi'(G)=r$.

We use the term {\sf circuit} for a connected $2$-regular graph and the term {\sf cycle} for a graph that all its vertices  have even degrees.
A {\sf cycle double cover ($\rm CDC$)}, $\mathcal{C}$, of a  graph $G$  is a collection of its cycles  such that every
 edge of $G$ is contained in precisely  two cycles in $\mathcal{C}$ and a  {\sf $k$-cycle double cover ($k$-$\rm CDC$)} of $G$ is a $\rm CDC$ of  $G$ such that consisting of at most $k$ cycles of $G$.
Note that the
cycles are not necessarily distinct.
A necessary condition for a graph to have a $\rm CDC$ is
the bridgeless property. Seymour~\cite{Sey} in $1979$ conjectured
that this condition is also sufficient.
\begin{conj}\label{conj:CDC}~{\rm\cite{Sey}} {\rm ($\rm CDC$ conjecture)}
Every bridgeless graph has a $\rm CDC$.
\end{conj}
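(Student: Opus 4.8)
The plan is to argue by contradiction via a minimal counterexample. Suppose the conjecture fails, and let $G$ be a bridgeless graph with the fewest edges that admits no CDC. The first step is to reduce $G$ to a cubic graph. Any vertex of degree $2$ can be suppressed, replacing its two incident edges by a single edge between its neighbours, since a CDC of the suppressed graph lifts verbatim to a CDC of $G$; minimality then forces minimum degree at least $3$. A vertex $v$ of degree at least $4$ is handled by splitting off a pair of edges at $v$ into a new degree-$2$ vertex while preserving bridgelessness; a CDC of the split graph again projects back to a CDC of $G$. The delicate point is guaranteeing that such a split can always be performed without creating a bridge, which is exactly what Fleischner's splitting lemma provides. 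The conclusion of this step is that the minimal counterexample may be taken cubic.

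The second step disposes of the $3$-edge-colourable case, which is the one place with a genuinely clean argument. If the cubic bridgeless graph $G$ has $\chi'(G)=3$, fix a proper $3$-edge-colouring with classes $M_1,M_2,M_3$. Each union $M_i\cup M_j$ is a $2$-regular spanning subgraph, hence a disjoint union of circuits, i.e.\ a cycle in the sense used here. The three cycles $M_1\cup M_2$, $M_1\cup M_3$, $M_2\cup M_3$ cover every edge exactly twice, because each edge lies in precisely one colour class and therefore in exactly two of the three pairwise unions. These three cycles thus form a $3$-CDC of $G$, contradicting the choice of $G$. Hence the minimal counterexample must have $\chi'(G)=4$.

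The third step sharpens the connectivity. Using minimality together with the standard reductions along small edge cuts — split $G$ along a $2$- or $3$-edge cut, obtain CDCs of the resulting pieces, and glue them back together across the cut — one shows that the minimal counterexample is cyclically $4$-edge-connected and has girth at least $5$. Combined with $\chi'(G)=4$, this means $G$ is a \emph{snark}, and the remaining task is reduced to producing a CDC of an arbitrary snark.

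This final task is the main obstacle, and I do not expect to clear it by elementary means: the CDC conjecture is well known to be equivalent to its restriction to snarks, and no construction is known that yields a CDC for every snark. The most promising reformulation is the surface-embedding one, in which a (strong) CDC corresponds to a closed $2$-cell embedding of $G$ in some surface whose face boundaries are the cover cycles; one could then try to build such an embedding, perhaps guided by an oriented CDC of the kind studied in the present paper. However, controlling the face structure uniformly across all snarks is precisely the difficulty that has kept the conjecture open, so any complete proof would have to supply a fundamentally new idea at exactly this step.
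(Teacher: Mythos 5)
You have not proved the statement, and you say so yourself in your final paragraph --- that admission is correct and is the decisive point. The statement is Seymour's cycle double cover \emph{conjecture}; the paper offers no proof of it (it is cited from \cite{Sey} precisely as an open problem), so there is no ``paper's proof'' to match. Your first three steps are the standard and essentially sound folklore reductions: suppressing degree-$2$ vertices and splitting high-degree vertices via Fleischner's lemma to reach a cubic minimal counterexample; observing that if $\chi'(G)=3$ the three unions $M_i\cup M_j$ of colour classes form a $3$-CDC, so the counterexample is not $3$-edge-colourable; and cutting along $2$- and $3$-edge cuts to force cyclic $4$-edge-connectivity and girth at least $5$. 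This reproduces exactly what the paper itself records, namely that a minimal counterexample must be a snark. (One small technical caveat in step one: suppressing a degree-$2$ vertex or splitting off an edge pair can create parallel edges, so the minimality argument must be run in the category of multigraphs, or else you must subdivide to restore simplicity --- routine, but worth stating.)

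The genuine gap is the fourth step: producing a CDC of an arbitrary snark is not a missing lemma but the entire open content of the conjecture, and no argument you give (nor any known argument) closes it. Consequently the proposal is a correct account of \emph{why the conjecture reduces to snarks}, not a proof of the conjecture, and it should be presented as such. If your goal within the framework of this paper were partial progress rather than the full conjecture, the productive direction would be the one the paper actually exploits: Theorem~A connecting $4$-NZF with $4$-CDCs and small OCDCs, and Theorem~B connecting OCDCs of bipartite graphs with $2$--simultaneous edge colorings --- these give the conjecture (indeed stronger, oriented versions) for restricted classes, which is the best that current techniques deliver.
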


No counterexample to the $\rm CDC$
conjecture is known. It is proved that the minimal counterexample to the $\rm CDC$
conjecture is a bridgeless cubic graph with edge chromatic number equal to $4$, which is called a {\sf snark}.
The {\rm CDC} conjecture has many stronger forms, one of which is the
following conjecture. An {\sf oriented cycle double cover ($\rm OCDC$)} of a  graph $G$  is a $\rm CDC$ of $G$ in which
every circuit can be oriented in such a way that every edge of the graph is covered by two directed
circuits in two different directions.
\begin{conj}\label{conj:OCDC}~{\rm\cite{Survey}} {\rm ($\rm OCDC$ conjecture)}
 Every bridgeless  graph  has an $\rm OCDC$.
\end{conj}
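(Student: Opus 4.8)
The plan is to begin by recording the essential difficulty, because it determines everything that follows: an $\rm OCDC$ is in particular a $\rm CDC$, so Conjecture~\ref{conj:OCDC} implies Conjecture~\ref{conj:CDC}, which is itself open. Hence no unconditional proof is available, and the only honest attack mirrors the reduction philosophy governing the $\rm CDC$ conjecture. The first step I would take is to reduce to the smallest possible class of graphs. Subdividing or suppressing degree-two vertices does not affect the existence of an oriented cover, and a vertex of degree exceeding three can be split into two vertices of smaller degree without creating a bridge, which only makes the existence of an $\rm OCDC$ harder; iterating, a minimal counterexample must be a bridgeless cubic graph. I would then try to run the same parity and edge-colouring analysis that forces a minimal $\rm CDC$ counterexample to be a \emph{snark}, aiming to show that the minimal counterexample to Conjecture~\ref{conj:OCDC} is a snark as well, so that the problem concentrates on bridgeless cubic graphs with $\chi'(G)=4$.

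The second step is to translate the orientation requirement into the language of surface embeddings, where the "two different directions" condition becomes geometric. A $2$-cell embedding of $G$ into a closed \emph{orientable} surface whose every face is bounded by a circuit yields an $\rm OCDC$ at once: the face boundaries are the circuits, and a fixed global orientation of the surface traverses each edge once within each of its two incident faces, automatically in opposite directions, which is exactly what the definition demands. Thus Conjecture~\ref{conj:OCDC} would follow from producing, for every bridgeless cubic graph, an orientable embedding all of whose faces are circuits, i.e.\ the orientable form of the Strong Embedding Conjecture. The planar case is then free and serves as the base case: a plane bridgeless graph has all faces bounded by cycles and the plane is orientable, so its face set is an $\rm OCDC$. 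This suggests organizing the remaining argument by surface genus or by a structural decomposition of $G$.

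The key step, and the main obstacle, is the construction of the required orientable circuit embedding for the nonplanar cubic graphs, the snarks. Here I would attempt an inductive construction along a small edge cut of $G$: cut $G$ into smaller bridgeless pieces, obtain oriented covers of the pieces by induction, and glue the directed circuits across the cut so that every edge remains doubly covered with opposite orientations. The difficulty is precisely that, unlike the unoriented $\rm CDC$, the gluing must respect one consistent global orientation, so the two partial orientations meeting at the cut must be \emph{compatible}; controlling this compatibility across a cut of size three, the relevant case for cubic graphs, is exactly where such an argument stalls, and is why the conjecture remains open. Consequently I would not expect a direct proof to succeed, and would instead pursue the relation developed later in this paper between $2$--simultaneous edge colourings and cycle double covers with prescribed orientation behaviour, which converts the embedding question into a colouring question on a reduced graph and is the route by which partial results toward Conjecture~\ref{conj:OCDC} realistically become accessible.
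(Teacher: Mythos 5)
You were asked to prove Conjecture~\ref{conj:OCDC}, but this is precisely the point: the statement is an open conjecture, attributed in the paper to Jaeger's survey~\cite{Survey}, and the paper contains no proof of it — it is stated as a strengthening of the equally open $\rm CDC$ conjecture (Conjecture~\ref{conj:CDC}). So there is no proof in the paper to compare yours against, and your proposal, which candidly declines to prove the statement, has the unavoidable but genuine gap that it establishes nothing unconditional: the central step — constructing an orientable circuit embedding for snarks, or gluing oriented covers compatibly across a $3$-edge cut — is exactly the part you leave open, as you yourself acknowledge. Judged as a proof, it is incomplete; necessarily so, since a correct complete proof would resolve a major open problem.

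That said, the conditional reductions you sketch are essentially sound and consistent with what the paper itself uses. The implication $\rm OCDC \Rightarrow CDC$ is immediate. The reduction of a minimal counterexample to cubic graphs via vertex splitting is legitimate, but with one caution: an arbitrary split of a high-degree vertex can create a bridge, so you need Fleischner's vertex-splitting lemma to guarantee a bridgeless split exists — this is exactly the device the paper deploys in the proof of Theorem~\ref{thm:co.ex.2--SE}(ii). The further reduction to snarks is also correct in spirit: a bridgeless cubic graph with $\chi'(G)=3$ admits a $4$-$\rm NZF$, and by Theorem~\ref{thm:4-NZF}(iii) a $4$-$\rm NZF$ is equivalent to an $\rm OCDC$ with four directed cycles, so only the case $\chi'(G)=4$ remains. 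Your observation that a closed $2$-cell embedding in an orientable surface yields an $\rm OCDC$ (orient the surface globally; each edge is traversed oppositely by its two face boundaries) is correct and gives the planar base case. Finally, your suggestion to route partial progress through the paper's Theorem~\ref{SE-CDC} and Theorem~\ref{SEOCDC}, connecting $\rm OCDC$s to $2$--simultaneous edge colorings, matches the paper's actual program — but none of this closes the conjecture, and your submission should be understood as a research plan rather than a proof.
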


The concept of cycle double cover has a relation with nowhere-zero flow in graphs. Some necessary relations of these two concepts are presented in what follows.\\
Let $G$ be a simple graph and $(D,f)$ be an ordered pair, where $D$ is
an orientation of $E(G)$ and $f$ is a weight on $E(G)$ to $\Bbb{Z}$.
For each $v\in V(G)$, denote
$$f^+(v)=\sum{f(e)}\ \ \ \ \ \mbox{and}\ \ \ \ \ f^-(v)=\sum{f(e)},$$
where the summation is taken over all directed edges of $G$ (under the orientation $D$)
with tails and heads, respectively, at the vertex $v$.
An {\sf integer flow} of $G$ is an ordered pair $(D,f)$ such that  for every vertex $v\in V(G)$, $f^+(v)=f^-(v)$.
The support of $f$, $supp(f)$, is the set of the edge $e\in E(G)$ that $f(e)\ne 0$.
 A {\sf nowhere-zero $k$-flow} of $G$ is an integer flow $(D,f)$ such that $supp(f)=E(G)$ and $-k< f(e)< k$, for every $e\in E(G)$ and  is denoted by $k$-$\rm NZF$.
\begin{lem}{\rm~\cite{Zhang'sBook}}\label{thm:4-NZF}
\item{\rm (i)}
 If every edge of a graph $G$ is contained in a circuit of length
at most $4$, then $G$ admits a $4$-$\rm NZF$.
\item{\rm (ii)}
 A graph $G$ admits a $4$-$\rm NZF$ if and only if $G$ has a $4$-$\rm CDC$.
\item{\rm (iii)}
A graph $G$ admits a $4$-$\rm NZF$ if and only if
$G$ has an $\rm OCDC$ consists of four directed cycles.
\end{lem}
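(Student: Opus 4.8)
The plan is to route all three parts through one engine: Tutte's classical theorem that a graph admits a $4$-$\mathrm{NZF}$ if and only if it admits a nowhere-zero $\Bbb{Z}_2\times\Bbb{Z}_2$-flow, equivalently if and only if $E(G)$ is the union of two even subgraphs (cycles) $C_1,C_2$. I would take this as the only external input and deduce (i)--(iii) from it, together with the elementary fact that the cycles of $G$ form a vector space over $\Bbb{Z}_2$ under symmetric difference $\triangle$.

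For (ii) I would prove both implications by exhibiting the two covering cycles explicitly. Given a $4$-$\mathrm{CDC}$ $\{C_1,C_2,C_3,C_4\}$, set $Z_1=C_1\triangle C_2$ and $Z_2=C_1\triangle C_3$; both are cycles, and a case check over the six possible types of "which two cycles contain $e$" shows $Z_1\cup Z_2=E(G)$, so Tutte yields a $4$-$\mathrm{NZF}$ (if the cover uses fewer cycles, pad with the empty cycle and the same check goes through). Conversely, from a $4$-$\mathrm{NZF}$ take $C_1,C_2$ with $C_1\cup C_2=E(G)$ and put $C_3=C_1\triangle C_2$; then every edge lies in exactly two of $C_1,C_2,C_3$, so this is a $\mathrm{CDC}$ using at most three cycles, in particular a $4$-$\mathrm{CDC}$.

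For (i) I would again aim to produce two cycles covering $E(G)$, now using the short-circuit hypothesis as the constructive resource. Working over $\Bbb{Z}_2\times\Bbb{Z}_2$, each circuit of length at most $4$ is itself a cycle carrying a constant nonzero flow, so I would build a nowhere-zero $\Bbb{Z}_2\times\Bbb{Z}_2$-flow by repeatedly adding such circuit-flows to repair edges still assigned $0$, measuring progress by the number of $0$-edges. The delicate configuration is a $4$-circuit whose other three edges already carry the three distinct nonzero values, where a single repair need not decrease the count; I would handle this by choosing a different short circuit through the offending edge or a local exchange, and this bookkeeping is the technical core of (i).

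Part (iii) splits into an easy and a hard direction. For "$\Leftarrow$", given an oriented $4$-$\mathrm{CDC}$ realised by signed $\{-1,0,1\}$-valued flows $g_1,g_2,g_3,g_4$ (the directed even subgraphs), the cover-in-opposite-directions condition says precisely that on each edge exactly two of the $g_i$ are nonzero with opposite signs; then $f=\sum_{i=1}^{4} i\,g_i$ is an integer flow whose value on every edge is a difference of two distinct elements of $\{1,2,3,4\}$, hence lies in $\{\pm1,\pm2,\pm3\}$, a $4$-$\mathrm{NZF}$. The reverse direction is where the real work lies and is the main obstacle. Starting from the $3$-$\mathrm{CDC}$ $\{C_1,C_2,C_3\}$ of (ii), I must choose balanced orientations $g_1,g_2,g_3$ of the three cycles with $g_1+g_2+g_3=0$, i.e. so that the two cycles through each edge run oppositely; equivalently I must invert the weighting map above and realise the integer $4$-flow $f$ as a flow valued in the differences $e_i-e_j$ (the roots of $A_3$), assigning to each edge a level pair whose difference is $|f(e)|$ so that every level class is balanced. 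A naive independent orientation of the three cycles fails at vertices where the forced reversals on shared edges cannot be absorbed by the remaining edges, so the orientations must be pinned down globally from the integer flow $f$ itself rather than from the underlying $\Bbb{Z}_2\times\Bbb{Z}_2$-flow; making this choice consistently is the crux of the lemma.
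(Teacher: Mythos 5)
The paper gives no proof of this lemma at all --- it is quoted verbatim from Zhang's book \cite{Zhang'sBook} --- so your attempt can only be measured against the known arguments. Measured that way, your part (ii) is correct and standard (the symmetric-difference case check and the converse via $C_3=C_1\triangle C_2$ both work), and your ``$\Leftarrow$'' direction of (iii) with the weighting $f=\sum_i i\,g_i$ is also correct. The forward direction of (iii), however, is not merely unfinished: the plan is structurally impossible. Choosing balanced orientations $g_1,g_2,g_3$ of the \emph{three} cycles of your $3$-CDC with $g_1+g_2+g_3=0$ is precisely an oriented CDC with three directed cycles, and by your own weighting device ($f=g_1+2g_2+3g_3$; each edge lies in exactly two layers with opposite signs, so $f(e)$ is a difference of two distinct weights in $\{1,2,3\}$, hence in $\{\pm 1,\pm 2\}$) this would produce a nowhere-zero $3$-flow. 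But a $4$-$\rm NZF$ does not imply a $3$-$\rm NZF$: $K_4$ is cubic with $\chi'(K_4)=3$, hence admits a $4$-$\rm NZF$, yet a cubic graph has a $3$-$\rm NZF$ only if it is bipartite, so $K_4$ has none. Therefore for $K_4$ no orientations of the three cycles satisfying $g_1+g_2+g_3=0$ exist, no matter how globally they are ``pinned down from the integer flow $f$.'' This is exactly why the lemma asserts an OCDC with \emph{four} directed cycles; the correct construction must start from a nowhere-zero $\Bbb{Z}_4$-flow and distribute each edge between two of four oriented layers (the $A_3$-root picture you mention is the right target), not orient the three-cycle Klein-group cover from (ii).

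Part (i) also has a genuine gap, at exactly the spot you flag. If $f$ is a $\Bbb{Z}_2\times\Bbb{Z}_2$-flow of maximum support and $e_0$ is a zero edge on a circuit $C$ of length at most $4$, then maximality forces \emph{every} such circuit through $e_0$ to be the bad configuration: with $z$ zero edges and $n_\alpha$ edges of value $\alpha$ on $C$, one needs $n_\alpha\ge z$ for each nonzero $\alpha$, which together with $n_a+n_b+n_c+z\le 4$ gives $z=1$, $n_a=n_b=n_c=1$, $|C|=4$. Your proposed escapes do not break this: adding $\alpha\Cchi_C$ to the values $(0,a,b,c)$ yields (for $\alpha=a$) the values $(a,0,c,b)$, i.e.\ the zero merely moves along $C$ while the three nonzero values are permuted back to $\{a,b,c\}$, so the identical configuration reappears; a different short circuit through the new zero edge is, by the same maximality computation, again bad; and the count of zero edges is only non-increasing, so the repair process can cycle forever. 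No strictly decreasing secondary potential is supplied, and natural candidates (such as the number of edges of a fixed value on $C$) are invariant under these toggles. Closing this hole needs a genuinely different idea --- e.g.\ a Catlin-type collapsibility/supereulerian reduction producing a spanning connected even subgraph, after which your own two-even-subgraph argument finishes via Tutte's theorem --- so as written, (i) and the forward half of (iii) remain unproven.
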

%
%

%

Let $G$ be a bipartite graph with bipartition $(X,Y)$.
The bipartite degree sequence
of $G$ is the sequence $(x_1,x_2,\ldots,x_n;y_1, y_2,\ldots,y_m)$, where $(x_1,x_2,\ldots,x_n)$ are the vertex
degrees in $X$ and $(y_1, y_2,\ldots,y_m)$ are the vertex degrees in $Y$. A sequence $S$ of positive
integers is called a {\sf bipartite graphic sequence} if there exists a bipartite graph $G$ whose
bipartite degree sequence is $S$; if so then the graph $G$ is called a {\sf realization} of $S$.

%
\begin{deff}\label{def:k--SE}~{\rm\cite{Mahdian}}
A {\sf $\mu$--simultaneous edge coloring} of  graph $G$ is a set of $\mu$ proper edge colorings of $G$ with the color set $[l]$, say $(c_1,c_2,\ldots,c_{\mu})$, such that
\item{$\bullet$} for each vertex, the sets of colors appearing on the edges incident to that vertex are the same
in each coloring;
\item{$\bullet$} no edge receives the same color in any two colorings.\\ \\
If $G$ has a $\mu$--simultaneous edge coloring, then $G$ is called a {\sf $\mu$--simultaneous edge colorable} graph.
The minimum $l$ that there exists a $\mu$--simultaneous edge coloring of $G$ with the color set $[l]$, is called {\sf $\mu-SE$ chromatic number} of $G$ and denoted by $\chi'_{\mu-SE}(G)$.
\end{deff}
Note that in every $\mu$--simultaneous edge coloring of a graph $G$, $\mu \le \deg_G(v)$, for every $v\in V(G)$, because every edge $e=uv\in E(G)$ admits $\mu$  different colors of colors appeared of the edge incident to  $v$.
\begin{obs}
 If $G$ is a $\mu$--simultaneous edge colorable graph, then $\mu\le \delta(G)$, where $\delta(G)$ is the minimum degree of $G$. Moreover,
$$\Delta(G)\le \chi'(G)=\chi'_{1-SE}(G)\le \chi'_{2-SE}(G)\le \cdots \le \chi'_{\mu-SE}(G).$$
\end{obs}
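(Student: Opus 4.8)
The plan is to verify both parts straight from Definition~\ref{def:k--SE}, since the assertion is structural rather than computational. For the bound $\mu\le\delta(G)$, I would first record the elementary fact that in any proper edge coloring the edges incident to a vertex $v$ carry pairwise distinct colors, so that exactly $\deg_G(v)$ colors appear at $v$; call this palette $C_i(v)$ in the coloring $c_i$. The first bullet of the definition guarantees that this palette is the \emph{same} set $C(v)$ in all $\mu$ colorings $c_1,\dots,c_\mu$, and $|C(v)|=\deg_G(v)$. Now fix any edge $e=uv$. For each $i$ the color $c_i(e)$ belongs to $C(v)$ (as $e$ is incident to $v$), and by the second bullet the colors $c_1(e),\dots,c_\mu(e)$ are pairwise distinct. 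Hence $C(v)$ contains at least $\mu$ colors, i.e. $\mu\le|C(v)|=\deg_G(v)$. Since this holds at every vertex lying on an edge, taking the minimum yields $\mu\le\delta(G)$ (this is exactly the remark preceding the Observation, under the standing assumption that every vertex has positive degree).

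For the displayed chain, the first inequality $\Delta(G)\le\chi'(G)$ is the classical lower bound: the $\Delta(G)$ edges meeting a vertex of maximum degree require $\Delta(G)$ distinct colors in any proper edge coloring. The equality $\chi'(G)=\chi'_{1-SE}(G)$ is a matter of unwinding the definition at $\mu=1$: the first bullet is automatic when there is a single coloring, and the second bullet (which forbids an edge repeating a color across \emph{two} colorings) is vacuous. Thus a $1$--simultaneous edge coloring is precisely a proper edge coloring, and the two parameters coincide.

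The remaining inequalities $\chi'_{k-SE}(G)\le\chi'_{(k+1)-SE}(G)$ I would obtain by a restriction argument. Starting from an optimal $(k+1)$--simultaneous edge coloring $(c_1,\dots,c_{k+1})$ on the color set $[l]$ with $l=\chi'_{(k+1)-SE}(G)$, simply discard one coloring and retain $(c_1,\dots,c_k)$. Both defining conditions are inherited: the common-palette condition, valid for all $k+1$ colorings at every vertex, continues to hold for the retained $k$; and the all-distinct condition, valid for every pair among the $k+1$ colorings, holds a fortiori for every pair among the $k$. Hence the retained family is a $k$--simultaneous edge coloring using colors from $[l]$, which gives $\chi'_{k-SE}(G)\le l=\chi'_{(k+1)-SE}(G)$, and the full chain follows.

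I do not expect a genuine obstacle here; the only point demanding a little care is the bookkeeping in the first part, namely identifying the size of the vertex palette with $\deg_G(v)$ and then confirming, via the second bullet, that the $\mu$ colors a single fixed edge receives are genuinely distinct and all drawn from that one common palette $C(v)$. Once this identification is in place, both the degree bound and the monotone chain are immediate.
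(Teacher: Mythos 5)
Your proposal is correct and matches the paper's own (very brief) justification: the paper's remark preceding the Observation is exactly your fixed-edge argument --- a single edge $e=uv$ receives $\mu$ pairwise distinct colors, all drawn from the common palette at $v$, whose size is $\deg_G(v)$ --- and the displayed chain is left to the reader, for which your definition-unwinding at $\mu=1$ and the restriction (discard-one-coloring) argument are the intended standard steps. No gaps; your parenthetical caveat about vertices of positive degree is a harmless refinement the paper glosses over.
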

There are some graphs $G$  that $\chi'(G) < \chi'_{\mu-SE}(G)$; for example in the next section we show that for graph $G$ shown
 in Figure~\ref{HGR}, $\chi'_{2-SE}(G)\le 4$, and by a case study, it can be checked that $G$ has no $2$--simultaneous edge coloring with $3$ colors. Thus, $\chi'_{2-SE}(G)=4$ while $\chi'(G)=3$. In this case, $\chi'_{2-SE}(G)=\Delta(G)+1$. This is a natural question: Is this true that $\Delta(G)\le \chi'_{2-SE}(G)\le \Delta(G)+1$?
 \begin{figure}[ht]
\centering \unitlength=1mm
\begin{picture}(44,36)
\thicklines
\drawthickdot{6.79}{30.16}
\drawthickdot{6.79}{22.16}
\drawthickdot{6.79}{14.16}
\drawthickdot{6.79}{6.16}
\drawthickdot{38.83}{30.16}
\drawthickdot{38.83}{22.16}
\drawthickdot{38.83}{14.16}
\drawthickdot{38.83}{6.16}
\drawpath{38.83}{30.16}{6.79}{30.16}
\drawpath{6.79}{30.16}{38.83}{22.16}
\drawpath{38.83}{22.16}{6.79}{14.16}
\drawpath{6.79}{14.16}{38.83}{6.16}
\drawpath{38.83}{6.16}{6.79}{6.16}
\drawpath{6.79}{6.16}{38.83}{30.16}
\drawpath{6.79}{14.16}{38.83}{14.16}
\drawpath{38.83}{14.16}{6.79}{22.16}
\drawpath{6.79}{22.16}{38.83}{30.16}
\drawpath{38.83}{22.16}{6.79}{22.16}
\end{picture}
 \vspace*{-6mm}\caption{\label{HGR} \scriptsize{$\chi'(G)=3$ and $\chi'_{2-SE}(G)=4$.}}
\end{figure}
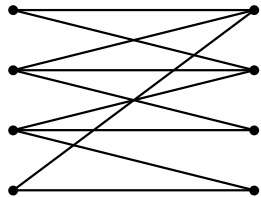
%

 At the $16^{\rm {th}}$ British Combinatorial Conference ($1997$), Cameron introduced the concept
{\sf $2$--simultaneous edge coloring}. He use this concept to reformulate a conjecture of  Keedwell ($1994$)
on the existence of critical partial Latin squares of given type. In fact he conjectured (called {\sf SE conjecture}) that
for each bipartite graphic sequence $S$ with all its elements greater than one, there exists
 a $2$--simultaneous edge colorable realization.

 Mahdian $\rm et\ al.$ in~\cite{Mahdian} showed that the  $2$--simultaneous edge coloring of
every bipartite graph is  equivalent to an $\rm OCDC$ of that graph.
Also, they  conjectured that every bridgeless bipartite  graph is $2$--simultaneous edge colorable.
\begin{lem}~{\rm ~\cite{Mahdian}}\label{SEOCDC}
Every bipartite graph $G$ is $2$-simultaneous edge colorable if and only if $G$ has an $\rm OCDC$.
\end{lem}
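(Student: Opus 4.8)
The plan is to set up an explicit correspondence, driven by the bipartition $(X,Y)$, between a $2$-simultaneous edge coloring of $G$ and an $\rm OCDC$, and to verify it in both directions. Throughout I would orient $G$ \emph{naturally}, directing every edge from its endpoint in $X$ to its endpoint in $Y$; the two colorings of a $2$-simultaneous edge coloring will correspond to the two senses (agreeing with, and opposing, this natural orientation) in which each edge gets covered.

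First I would prove the forward implication. Suppose $(c_1,c_2)$ is a $2$-simultaneous edge coloring with color set $[l]$. For each color $i$ let $M_i$ be the edges with $c_1$-color $i$ and $N_i$ the edges with $c_2$-color $i$; both are matchings since $c_1,c_2$ are proper, and $M_i\cap N_i=\emptyset$ because no edge repeats a color. The color-set condition says a vertex $v$ is saturated by $M_i$ exactly when it is saturated by $N_i$, so $M_i\cup N_i$ is $2$-regular on its vertex set, i.e.\ a disjoint union of circuits in which $M_i$- and $N_i$-edges alternate. An edge $e$ with $c_1(e)=a$ and $c_2(e)=b$ (so $a\ne b$) lies in exactly the two collections indexed by $a$ (through $M_a$) and by $b$ (through $N_b$); hence the $l$ subgraphs $M_i\cup N_i$ form a $\rm CDC$. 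Finally I would orient each circuit by sending its $M_i$-edges from $X$ to $Y$ and its $N_i$-edges from $Y$ to $X$; since $G$ is bipartite each circuit alternates between $X$ and $Y$, so this rule is a consistent circuit orientation, and the edge $e=xy$ above is traversed $x\to y$ in circuit $a$ and $y\to x$ in circuit $b$, yielding the required $\rm OCDC$.

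For the converse I would read the same picture backwards. Given an $\rm OCDC$, decompose its cycles into directed circuits $D_1,\dots,D_m$, so each edge lies in exactly two of them, traversed in opposite directions; relative to the natural orientation, each edge is traversed $X\to Y$ by one of its two circuits and $Y\to X$ by the other. Define $c_1(e)$ to be the index of the circuit traversing $e$ from $X$ to $Y$ and $c_2(e)$ the index of the circuit traversing it from $Y$ to $X$. Because a circuit is a single cycle through distinct vertices and $G$ is bipartite, the $X\to Y$ edges of any circuit are pairwise disjoint, and so are its $Y\to X$ edges; thus every color class of $c_1$ and of $c_2$ is a matching, so both colorings are proper, and $c_1(e)\ne c_2(e)$ since the two circuits through $e$ are distinct.

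The remaining, and most delicate, step is the color-set condition, and this is where I expect the real work. Fix a vertex $v$, say $v\in X$, so every edge at $v$ points out of $v$ under the natural orientation. Each circuit passing through $v$ uses there exactly one out-edge (traversed $X\to Y$, contributing its index to the $c_1$-colors at $v$) and one in-edge (traversed $Y\to X$, contributing the \emph{same} index to the $c_2$-colors at $v$). Hence the set of $c_1$-colors at $v$, the set of $c_2$-colors at $v$, and the set of circuits through $v$ all coincide, which is precisely the color-set condition; the case $v\in Y$ is symmetric. I expect the main obstacle to be exactly this bookkeeping: checking that the natural bipartite orientation makes the two ``sides'' of each circuit match up at every vertex, since this is the only place where bipartiteness is essential, and it is what forces the two colorings to share their palette at every vertex.
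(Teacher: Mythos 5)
Your proof is correct and follows essentially the same route as the cited source: the paper states this result (Theorem~\ref{SEOCDC}) with a reference to~\cite{Mahdian} rather than proving it, but the standard argument there is exactly your correspondence, pairing the color classes $M_i\cup N_i$ into alternating even circuits and orienting them via the bipartition. Indeed, the paper itself reuses this construction internally (the subgraphs $f_1^j\cup f_2^j$ in the proof of Theorem~\ref{SE-CDC}), and your verification of the color-set condition in the converse direction is the right and complete bookkeeping.
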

\begin{conj}\label{conj:Strong2--SE}{\rm~\cite{Mahdian}} {\rm(strong SE conjecture)}
 Every bridgeless bipartite
graph is   $2$--simultaneous edge colorable.
\end{conj}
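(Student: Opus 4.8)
The plan is to translate the statement into a purely structural one about cycle double covers and then attack it by the usual route for such problems, reducing the general bipartite graph to a regular one. By Lemma~\ref{SEOCDC}, a bipartite graph is $2$--simultaneous edge colorable exactly when it admits an $\rm OCDC$, so the strong SE conjecture is equivalent to the assertion that every bridgeless bipartite graph has an $\rm OCDC$; this is precisely Conjecture~\ref{conj:OCDC} restricted to the bipartite class, and hence a genuinely weaker target than the full $\rm OCDC$ conjecture. First I would isolate the case that is completely within reach, namely the regular case, and then try to reduce the irregular case to it.

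For a $k$-regular bipartite graph $G$ with parts $X$ and $Y$ (where $k\ge 2$, which is forced by bridgelessness when $G$ is regular), K\"onig's edge colouring theorem gives $\chi'(G)=k$, so $G$ is $1$-factorable: $E(G)=F_1\cup\cdots\cup F_k$ with each $F_i$ a perfect matching. Reading the indices modulo $k$, set $C_i=F_i\cup F_{i+1}$. Each $C_i$ is a spanning $2$-regular subgraph, hence, as $G$ is bipartite, a disjoint union of even circuits, and an edge of $F_i$ lies in exactly the two members $C_{i-1}$ and $C_i$; thus $\{C_1,\dots,C_k\}$ is a $\rm CDC$. To orient it, traverse each circuit of $C_i$ so that the $F_i$-edges run from $X$ to $Y$ and the $F_{i+1}$-edges run from $Y$ to $X$; since every vertex of a circuit of $C_i$ meets exactly one $F_i$-edge and one $F_{i+1}$-edge, this rule is consistent and turns each circuit into a directed circuit. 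An edge $e\in F_i$ is then oriented $X\to Y$ inside $C_i$ but, playing the role of the second matching in $C_{i-1}$, is oriented $Y\to X$ there; the two covering circuits run through $e$ in opposite directions, so this is an $\rm OCDC$. By Lemma~\ref{SEOCDC}, every regular bridgeless bipartite graph is therefore $2$--simultaneous edge colorable.

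The substance of the conjecture is the passage from regular to arbitrary bipartite graphs, and this is where I expect the real obstacle to lie. The naive idea of embedding $G$ into a $\Delta(G)$-regular bipartite supergraph fails, since deleting the added edges from an $\rm OCDC$ of the supergraph destroys cycles rather than yielding an $\rm OCDC$ of $G$. I would instead argue by contradiction on a minimal counterexample $G$ (minimizing $|E(G)|$), along the lines the paper announces for its extremal counterexample. The aim is to show that a minimal counterexample must already be essentially regular, and more generally to exhibit a reducible local configuration: a subgraph whose removal or contraction produces a smaller bridgeless bipartite graph, so that an $\rm OCDC$ of the latter (which exists by minimality) can be lifted back to one of $G$. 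Even arranging the first step is subtle, because the standard series reduction at a degree-$2$ vertex joins two vertices of the same part and so does not preserve bipartiteness; any reduction must be tailored to respect the bipartition.

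The delicate point throughout is orientation-compatibility under these reductions: a local move may merge or split the covering circuits, and re-coordinating their orientations so that every edge is still covered twice in opposite directions is exactly the difficulty that keeps the $\rm CDC$ and $\rm OCDC$ conjectures open. One cannot shortcut this via flows either: by \ref{thm:4-NZF} an $\rm OCDC$ on four directed cycles is equivalent to a $4$-$\rm NZF$, but subdividing each edge of the Petersen graph once produces a bridgeless bipartite graph with no $4$-$\rm NZF$, so some bipartite graphs necessarily need $\rm OCDC$s with more than four cycles. Hence the proof cannot rest on the $4$-flow results alone, and controlling the number and orientations of the covering cycles for non-regular graphs is the crux I expect to be the main barrier.
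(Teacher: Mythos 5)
You cannot be faulted for failing to prove this statement, because it is not a theorem of the paper: it is the (open) strong SE conjecture of Mahdian et al., and the paper itself offers no proof, only an equivalence and a study of a hypothetical minimal counterexample. Your proposal correctly recognizes this. The reduction via Lemma~\ref{SEOCDC} to the existence of an $\rm OCDC$ for bridgeless bipartite graphs is exactly the paper's standpoint, and your regular case is sound: for a $k$-regular bipartite $G$, the $\rm CDC$ by $C_i=F_i\cup F_{i+1}$ (indices mod $k$) with the $X\to Y$/$Y\to X$ orientation rule is a correct $\rm OCDC$, and it recovers what the paper imports as Theorem~\ref{r-regular} (every $r$-regular $1$-factorable graph is $\mu$--simultaneous edge colorable for $\mu\le r$), of which the bipartite case is immediate by K\"onig. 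Your side remark about subdivided Petersen is also right: subdivision preserves flow numbers, so the once-subdivided Petersen graph is a bridgeless bipartite graph with no $4$-$\rm NZF$, and by Lemma~\ref{thm:4-NZF}~(iii) any $\rm OCDC$ of it needs more than four cycles; so the conjecture genuinely exceeds the reach of the $4$-flow machinery that settles the (weak) SE conjecture in~\cite{4NZF--SE}.

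Where your plan and the paper diverge is in the expected shape of a minimal counterexample. You hope to show it is ``essentially regular''; the paper's Theorem~\ref{thm:co.ex.2--SE} shows instead that an extremal counterexample (maximum vertices among minimum-edge ones) is $2$-connected, has $\delta=2$ and $\Delta=3$, has no nontrivial $2$-edge-cut, remains bridgeless after deleting any degree-$2$ vertex, and satisfies a neighborhood condition at degree-$2$ vertices --- so it is subcubic but provably \emph{not} regular (a cubic bipartite graph is $1$-factorable, hence colorable by Theorem~\ref{r-regular}, which is exactly how the paper forces $\delta=2$). The paper also shows how to handle two of the obstacles you flag: degrees above $3$ are reduced by Fleischner's vertex-splitting lemma followed by subdividing the new edge (subdivision restores bipartiteness and keeps the edge count, which is why the extremal order is taken maximal), and $2$-edge-cuts are spliced in a bipartition-respecting way, with two cases according to which sides of $(X,Y)$ the cut ends lie in --- in one case adding an edge, in the other adding a new degree-$2$ vertex. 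Your concern about re-coordinating orientations is, as you predict, the step nobody can close; the paper stops at these counterexample properties, and no reducible configuration covering degree-$2$ vertices in general is known.
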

Luo $\rm et\ al.$ in~\cite{4NZF--SE} showed that  every bipartite graphic sequence $S$ with all its elements greater than one, has a
realization that admits a $4$-$\rm NZF$. Thus, by Theorems~\ref{thm:4-NZF}~(iii) and~\ref{SEOCDC}, they proved that the SE conjecture is true.
%

In Section $2$, we see the relation between $\mu$--simultaneous edge coloring and $\mu$--way Latin trade,
also, we give some sufficient conditions for graphs to be  $\mu$--simultaneous edge colorable. In Section $3$, we consider the case
$\mu=2$. First, some properties for the extermal counterexample to the strong SE conjecture are given; then, we discuss on $2$--simultaneous edge coloring  for general graphs and introduce some  $2$--simultaneous edge colorable graphs and some graphs which has no $2$--simultaneous edge coloring.

\section{\large $\mu$--simultaneous edge coloring and $\mu$--way Latin trade}
\noindent A {\sf partial Latin square} $P$ of order $n$ is an
$n\times n$ array of elements from the set $[n]=\{1,\ldots,n\}$, where each
element of $[n]$ appears at most once in each row and at most once
in each column.  We can represent each  partial Latin square, $P$, as a subset of $[n]\times [n]\times [n]$,
$$P=\{(i,j;k) : \mbox{ element $k$ is located in position }(i,j)\}.$$
The partial latin square $P$ is called {\sf symmetric} if $(i,j;k)\in P$ if and only if $(j,i;k)\in P$.
The set $S_P=\{(i,j) : (i,j;k)\in P,\ 1\le k\le n \}$ of the
partial Latin square $P$ is called the {\sf shape } of $P$ and
$|S_P|$ is called the {\sf volume} of $P$. By ${\cal R}_P^i$ and
${\cal C}_P^j$ we mean the set of entries in row $i$ and column
$j$, respectively of $P$.

A {\sf $\mu$--way Latin trade},
$(T_1,\ldots,T_{\mu})$, of volume $s$ is a collection of $\mu$ partial Latin squares
$T_1,\ldots,T_{\mu}$, containing exactly the same $s$ filled
cells, such that if cell $(i, j)$ is filled, it contains a
different entry in each of the $\mu$ partial Latin squares, and
 row $i$ in each of the $\mu$ partial Latin squares
contains, set-wise, the same symbols and column $j$, likewise. If
$\mu=2$, $(T_1,T_2)$ is called a {\sf Latin bitrade}.
The {\sf volume spectrum } ${\cal S}_\mu$ for all $\mu$--way Latin trades is
the set of possible volumes of $\mu$--way Latin trades.
 For a survey on this
topic see \cite{MR2041871},
 \cite{MR2453264},  and \cite{MR2048415}.

For every  $\mu$--way Latin trade $T=(T_1,\ldots,T_{\mu})$ of volume $s$ there exists a $\mu$--simultaneous edge colorable  bipartite graph $G$ with $s$ edges and bipartite degree sequence
$S=(|{\cal R}_T^1|,\ldots,$ $|{\cal R}_T^n|;|{\cal C}_T^1|,\ldots,|{\cal C}_T^m|)$. In fact $G=(X,Y)$ is a bipartite graph, where $X=\{x_1,\ldots,x_n\}$
and $Y=\{y_1,\ldots,y_m\}$ such that for every filled cell $(i,j)$ in $T$, there is an edge between $x_i$ and $y_j$ and the element that located in position $(i,j)$ of $T_k$ is the color of edge $x_iy_j$ in the $k^{\mbox{th}}$ coloring of $\mu$--simultaneous edge coloring of $G$, for $1\le k\le \mu$.

In general, for every symmetric $\mu$--way Latin trade $T=(T_1,\ldots,T_{\mu})$ of volume $s$ that $(i,i)\notin S_T$, for every $i$, there exists a  $\mu$--simultaneous edge colorable  graph $G$ with $\frac{1}{2}s$ edges and degree sequence
$S=(|{\cal R}_T^1|,\ldots,$ $|{\cal R}_T^n|)$. In fact $G$ is a  graph, where $V(G)=\{v_1,\ldots,v_n\}$
 such that for every filled cell $(i,j)$ in $T$, there is an edge between $v_i$ and $v_j$ and the element that located in position $(i,j)$ of $T_k$ is the color of edge $v_iv_j$ in the $k^{\mbox{th}}$ coloring of $\mu$--simultaneous edge coloring of $G$, for $1\le k\le \mu$.

In Figure~\ref{Latintrade} a  Latin bitrade, $T=(T_1,T_2)$, of volume $10$ is demonstrated. ($\bullet$ means the cell is empty.)
In fact, $T$ is the  Latin bitrade corresponding to a $2$--simultaneous edge coloring of the graph  $G$ that showed in Figure~\ref{HGR}. Therefore, $\chi'_{2-SE}(G)\le 4$.
\def\arraystretch{1.3}
\begin{center}
\begin{tabular}{|c|c|c|c|}\hline
1&2&$\bullet$&$\bullet$\\\hline
2&4&3&$\bullet$\\\hline
$\bullet$&1&4&3\\\hline
3&$\bullet$&$\bullet$&1\\\hline
\end{tabular}
\hspace*{10mm}
\begin{tabular}{|c|c|c|c|}\hline
2&1&$\bullet$&$\bullet$\\\hline
3&2&4&$\bullet$\\\hline
$\bullet$&4&3&1\\\hline
1&$\bullet$&$\bullet$&3\\\hline
\end{tabular}
\end{center}

\begin{center}
\begin{figure}[ht]
\vspace*{-7mm} \caption{\label{Latintrade}\scriptsize{$T=(T_1,T_2)$ a Latin bitrade of volume $10$.}}
\end{figure}
\end{center}
Since Luo $\rm et\ al.$ in~\cite{4NZF--SE} showed that  each bipartite graphic sequence $S$ with all its elements greater than $1$, has a $2$--simultaneous edge colorable bipartite realization, we have $${\cal S}_2=\Bbb{N}\setminus \big \{1,2,3,5\}.$$
\begin{lem}{\rm~\cite{MR1904722,MR1874724}}\label{spec3}
The volume spectrums  for all $\mu$--way Latin trades, $\mu=3,4,5$ are
$${\cal S}_3=\Bbb{N}\setminus \big([1,8]\cup \{10,11,13,14\}\big);$$
$${\cal S}_4=\Bbb{N}\setminus ([1,15]\cup \{17,18,19,21,22,26\});$$
$${\cal S}_5=\Bbb{N}\setminus ([1,24]\cup [26,29] \cup \{31,32,33,37,38\}).$$
\end{lem}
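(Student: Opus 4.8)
The plan is to establish each of the three identities by a two-sided argument: a \emph{realizability} direction, producing a $\mu$--way Latin trade of every volume that the statement places in ${\cal S}_\mu$, and a \emph{non-existence} direction, proving that no $\mu$--way Latin trade attains any of the excluded volumes. I would run the same scheme for $\mu=3,4,5$, the three cases differing only in the length of the finite bookkeeping at the end.

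The starting point for both directions is a local counting bound. Fix a $\mu$--way Latin trade $(T_1,\dots,T_\mu)$ and a nonempty row $i$ with $r_i$ filled cells. Stacking the $\mu$ partial squares along that row yields a $\mu\times r_i$ array in which each column (a single filled cell $(i,j)$) carries $\mu$ pairwise distinct symbols while each row (a single $T_k$) carries $r_i$ distinct symbols; that is precisely a $\mu\times r_i$ Latin rectangle, so $r_i\ge\mu$, and symmetrically every nonempty column has at least $\mu$ cells. Since each nonempty column meets at least $\mu$ rows, there are at least $\mu$ nonempty rows, whence $s=\sum_i r_i\ge\mu^2$, with equality forcing the fully filled $\mu\times\mu$ shape. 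This single inequality already removes the initial blocks $[1,8]$, $[1,15]$ and $[1,24]$.

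For realizability I would assemble a short library of explicit trades and then close it under disjoint union. The canonical building block is the $\mu\times\mu$ shape filled cyclically, letting cell $(i,j)$ of $T_k$ hold $i+j+k$ reduced modulo $\mu$; these $\mu$ squares are pairwise discordant and all share the same row and column symbol sets, so they form a trade of volume $\mu^2$. A handful of further small trades (filled $\mu\times t$ rectangles, and a few patched shapes) supply several more volumes. The key structural fact is that if two trades occupy disjoint row sets and disjoint column sets, then their superposition is again a trade, so the set of realizable volumes is closed under addition. Once the library contains volumes whose greatest common divisor is $1$, this set is cofinite, and the theorem reduces to a finite range: below the threshold I must exhibit a construction for each non-excluded volume, which is routine.

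The remaining, and genuinely delicate, part is the non-existence direction for the sporadic exceptions just above $\mu^2$---the values $10,11,13,14$ for $\mu=3$, and their analogues for $\mu=4,5$. Here the crude bound $s\ge\mu^2$ says nothing, and one must argue directly with the admissible shapes: the row-length and column-length vectors are sequences of integers at least $\mu$ summing to $s$, the filled cells lie in the resulting grid, and that grid must actually admit $\mu$ mutually discordant Latin fillings. I expect this to be the main obstacle---a bounded but careful structural case analysis over the few near-minimal shapes of each forbidden volume, showing in every case that the discordance requirement cannot be met. With both directions in hand, the stated descriptions of ${\cal S}_3$, ${\cal S}_4$ and ${\cal S}_5$ follow.
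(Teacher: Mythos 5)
This statement is quoted in the paper as a known result, cited to Adams, Billington, Bryant and Mahmoodian \cite{MR1904722,MR1874724}; the paper contains no proof of it, so your proposal can only be measured against what such a proof must actually contain. Your first step is correct and is indeed the standard opening move: stacking the $\mu$ squares along a nonempty row $i$ gives a $\mu\times r_i$ array that is row-Latin on the common symbol set of size $r_i$ and has pairwise distinct column entries, forcing $r_i\ge\mu$ for every nonempty row and column, hence $s\ge\mu^2$ with equality only for the filled $\mu\times\mu$ shape; the cyclic filling of that shape and closure of realizable volumes under row- and column-disjoint unions are also fine. But note what this actually establishes: the exclusion of $[1,\mu^2-1]$ and the \emph{cofiniteness} of ${\cal S}_\mu$. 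The theorem asserts two far sharper finite statements, and both are deferred in your sketch rather than proved.

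Concretely, on the realizability side your ``library'' is the crux, not a footnote. For $\mu=3$ the volumes $17$, $19$, $22$, $23$ are neither volumes $mn$ of filled rectangles with $m,n\ge 3$ nor sums of two smaller realizable volumes (e.g.\ $17=9+8$ and $12+5$ both fail, since $8$ and $5$ lie outside ${\cal S}_3$), so each requires a bespoke non-rectangular trade that you never exhibit; the same issue recurs for $\mu=4$ (e.g.\ $23$, $29$, $31$) and $\mu=5$ (e.g.\ $34$, $39$). Without these explicit constructions your argument cannot recover the exact claimed sets. On the non-existence side, ruling out the sporadic volumes --- $10,11,13,14$ for $\mu=3$; $17,18,19,21,22,26$ for $\mu=4$; $26$--$29$, $31$--$33$, $37$, $38$ for $\mu=5$ --- is the substantive content of the cited papers: for each such $s$ one must enumerate all row- and column-size profiles with parts at least $\mu$ summing to $s$ (already for $s=38$, $\mu=5$ this is a sizable list) and show in every case that $\mu$ mutually discordant fillings with matching row and column symbol sets cannot exist. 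Your sketch correctly identifies this as ``the main obstacle'' but supplies none of it. So the proposal is a reasonable plan whose two load-bearing components --- the explicit small-trade constructions and the finite impossibility analysis --- are missing; as written it proves only $s\ge\mu^2$ plus an eventual-realizability statement, not the three displayed identities.
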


Let $S=(3,3,3,4;3,3,3,4)$ be a bipartite graphic sequence.
By Theorem~\ref{spec3}, there is no $3$--way Latin trade of volume $3+3+3+4=13$. Thus, the bipartite graph $G=(X,Y)$ with $X=\{x_1,x_2,x_3,x_4\}$ and
$Y=\{y_1,y_2,y_3,y_4\}$ and $E(G)=\{x_iy_j : 1\le i\ne j\le 4\}\cup \{x_4y_4\}$ is not
$3$--simultaneous edge colorable. Note that $G$ is a $3$--edge-connected bipartite graph. Therefore, the generalization of the  strong SE conjecture and SE conjecture are not true.

One can be asked the following two natural questions related to this concept.
\begin{que}\label{Sk-SE}
Is there a positive integer $s_{\mu}$ such that every $\mu$--edge-connected bipartite
graph with at least $s_\mu$ edges admits a  $\mu$--simultaneous edge coloring?
\end{que}

\begin{que}\label{k-SE}
Is there a positive integer $s_{\mu}$ such that
each bipartite graphic sequence $S=(x_1,\ldots,x_n;y_1,\ldots,y_m)$ with all its elements greater than $\mu-1$ and $\sum_{1\le i\le n}x_i\ge s_{\mu}$, there exists a   $\mu$--simultaneous edge colorable bipartite realization?
\end{que}

In~\cite{Edmonds}, Edmonds showed that every graphic degree sequence, with all degrees at least $\mu\ge 2$, has a $\mu$--edge-connected realization. In~\cite{Hajiaghaee}, Hajiaghaee $\rm et\ al.$  proved that every bipartite graphic sequence, with all degrees at least $2\mu\ (\mu\ge 1)$, has a $2\mu$--edge-connected realization.
In the following theorem we prove
 a generalization of these theorems;   every bipartite graphic sequence, with all elements greater than $\mu -1$, has a $\mu$--edge-connected bipartite realization.
Therefore, if the response of Question~\ref{Sk-SE} is positive, then the response of Question~\ref{k-SE} is also positive.
For this purpose we need the following theorem.
\begin{lem}{\rm~\cite{Mahdian}}\label{2realization}
For every bipartite graphic sequence $S$ with all its elements greater than one,  there
exists a $2$--edge-connected realization.
\end{lem}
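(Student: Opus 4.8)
The plan is to start from an arbitrary realization of $S$ (which exists because $S$ is bipartite graphic) and to improve it by \emph{$2$-switches}, i.e. by replacing a pair of edges $x_ay_b,x_cy_d$ with $x_ay_d,x_cy_b$ whenever the latter are not already present; such a switch preserves the bipartite degree sequence. Among all realizations of $S$ I would fix one, call it $G$, that first minimizes the number of connected components and, subject to that, minimizes the number of bridges. Since every element of $S$ exceeds $1$, every vertex of $G$ has degree at least $2$. The goal is to show that this extremal $G$ has no bridge and is connected, hence is $2$-edge-connected.

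First I would argue $G$ is connected. If not, pick two components and an edge $x_ay_b$ in the first and $x_cy_d$ in the second; as the components are disjoint, neither $x_ay_d$ nor $x_cy_b$ is present, so the switch is legal and it merges the two components into one, contradicting minimality of the number of components. Next, suppose $G$ is connected but has a bridge $e=x_iy_j$, and write $G-e=A\sqcup B$ with $x_i\in A$ and $y_j\in B$. The key structural observation is that each side contains a cycle: in $A$ every vertex keeps its degree $\ge 2$ except $x_i$, whose degree drops only to $\ge 1$, so $A$ is a connected graph with at most one vertex of degree below $2$, hence it is not a tree and contains a cycle $C_A$; symmetrically $B$ contains a cycle $C_B$.

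I would then pick a cycle edge $x_ay_b\in C_A$ with $x_a\ne x_i$ (possible since a bipartite cycle has at least two vertices in $X$) and any cycle edge $x_cy_d\in C_B$, and perform the switch $\{x_ay_b,x_cy_d\}\to\{x_ay_d,x_cy_b\}$. Because the only edge across the cut $[V(A),V(B)]$ is $e$, and $x_a\ne x_i$ while $x_c\in B$ forces $x_c\ne x_i$, both new edges are legal and both cross the cut; thus after the switch the cut carries the three edges $e,x_ay_d,x_cy_b$, so $e$ is no longer a bridge.

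The hard part, and the step that must be handled with care, is to verify that this switch creates \emph{no} new bridge, so that the total number of bridges strictly drops and minimality of $G$ is contradicted. Here I would reason directly with the two cycles. The new edges $x_ay_d$ and $x_cy_b$ lie on the cycle formed by the edge $x_ay_d$, the $y_d$–$x_c$ arc of $C_B$, the edge $x_cy_b$, and the $y_b$–$x_a$ arc of $C_A$; and $e$ now lies on a cycle since $A$ and $B$ stay joined by two further edges. For any edge $f$ that was a non-bridge of $G$ it lay on a cycle $Z$; if $Z$ avoids the two deleted edges it survives, while if $Z$ used, say, $x_ay_b$, then $Z$ lay entirely inside $A$ (no cycle of $G$ meets the bridge $e$), and replacing $x_ay_b$ by the detour $x_a\!\to\! y_d\!\to\!\cdots\!\to\! x_c\!\to\! y_b$ through $B$ yields a cycle through $f$ whose interiors in $A$ and in $B$ are disjoint. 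Hence every former non-bridge remains a non-bridge while $e$ becomes a non-bridge, so the bridge count strictly decreases, the desired contradiction. I expect this bridge-bookkeeping to be the only delicate point; the connectivity reduction and the existence of $C_A,C_B$ are routine.
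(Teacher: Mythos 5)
The paper offers no proof of this statement at all: it is quoted verbatim from Mahdian et al.\ \cite{Mahdian}, so there is nothing internal to compare against except the closely related argument the paper gives for Theorem~\ref{thm:realization}. Your proof is the standard one and is in the same spirit as that argument: fix an extremal realization, find circuits on both sides of a small cut, and destroy the cut with a $2$-switch on circuit edges while checking that no new defect is created. Your bridge-elimination step is sound as written: choosing $x_ay_b\in C_A$ with $x_a\ne x_i$ and $x_cy_d\in C_B$ makes both new edges legal (they cross the cut, whose only edge is $e$), deleting a circuit edge keeps each of $A$ and $B$ connected, so after the switch three edges cross the cut and $e$ lies on a circuit; and your detour argument correctly shows every former non-bridge still lies on a circuit, since any circuit of $G$ avoids $e$ and hence lives entirely in $A$ or in $B$, so it meets at most one deleted edge and can be rerouted through the two new edges and the surviving arc of the opposite circuit. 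The existence of $C_A$ and $C_B$ from $\delta(G)\ge 2$ (a connected graph with at most one vertex of degree less than $2$ is not a tree) is also fine.

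There is one concrete slip, in the connectivity step. With an \emph{arbitrary} edge from each of two components, the switch need not merge them: if $x_ay_b$ is a bridge of its component and $x_cy_d$ is a bridge of its component, the switch re-pairs the four resulting pieces into two components again, leaving the component count unchanged. For instance, take each component to be two $4$-circuits joined by a bridge, and choose the two bridges as the switched pair; the result is again two components of the same shape. The fix is immediate and uses an idea you already deploy in the bridge step: since every vertex has degree at least $2$, every component contains a circuit, so choose $x_ay_b$ on a circuit of its component. Then that component minus $x_ay_b$ stays connected, and both new edges $x_ay_d$, $x_cy_b$ attach the (at most two) pieces of the other component to it, so the number of components strictly drops. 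With that one-line repair, your lexicographic minimization (components first, then bridges, noting the bridge-killing switch preserves connectedness) goes through and the extremal realization is connected and bridgeless, hence $2$-edge-connected.
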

\begin{thm}\label{thm:realization}
Every bipartite graphic sequence $S$ with all its elements greater than $\mu-~1,$ $\mu\ge ~3$,
has a $\mu$--edge-connected realization.
\end{thm}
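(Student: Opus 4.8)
The plan is to prove the statement by induction on $\mu$, using the case $\mu=2$ (Lemma~\ref{2realization}) as the base, and to carry out the induction step by repairing a $(\mu-1)$-edge-connected realization into a $\mu$-edge-connected one through degree-preserving $2$-switches. Throughout, write $d(A)=|[A,\overline A]|$ for the size of the cut determined by $A$.

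For the step, assume the assertion for $\mu-1$ and let $S$ have all elements at least $\mu\ (\ge 3)$. Since all elements then exceed $(\mu-1)-1$, the induction hypothesis yields a $(\mu-1)$-edge-connected realization $G$, and every realization of $S$ has minimum degree at least $\mu$. As $G$ is $(\mu-1)$-edge-connected, every edge cut has at least $\mu-1$ edges, so a \emph{tight} cut (one with fewer than $\mu$ edges) has exactly $\mu-1$ edges; it therefore suffices to exhibit a $(\mu-1)$-edge-connected realization with no tight cut. I would choose, among all $(\mu-1)$-edge-connected realizations of $S$, one minimizing the number of tight cuts, and among these fix a tight cut $[A,\overline A]$ with $|A|$ as small as possible. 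Writing $A_X=A\cap X$, $A_Y=A\cap Y$ and similarly for $\overline A$, one checks first that $A_X,A_Y,\overline A_X,\overline A_Y$ are all nonempty: if, say, $A\subseteq X$, then all edges incident to $A$, of which there are $\sum_{v\in A}\deg_G(v)\ge\mu|A|\ge\mu$, cross the cut, contradicting $d(A)=\mu-1$. Moreover every vertex $v$ of $A$ sends at most $\mu-1<\deg_G(v)$ edges across, hence keeps a neighbour inside $A$, so $G[A]$ contains an edge $ab$ with $a\in A_X,\ b\in A_Y$; symmetrically $G[\overline A]$ contains an edge $cd$ with $c\in\overline A_X,\ d\in\overline A_Y$.

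The decisive move is the $2$-switch that replaces $\{ab,cd\}$ by $\{ad,cb\}$ whenever $ad,cb\notin E(G)$: it preserves the bipartition and all degrees, converts two non-cut edges into two cut edges, and so raises $d(A)$ to $\mu+1$, destroying this tight cut. Two points must then be settled, and they are where the real work lies. First, the required non-edges $ad,cb$ must be available; if for every admissible choice they were already present, some vertex of $A_X$ would be adjacent to all of $\overline A_Y$ (and symmetrically), which forces at least $\mu$ edges across $[A,\overline A]$, a contradiction. Second, and this is the main obstacle, the switch must not create a new tight cut, for otherwise the number of tight cuts need not drop. Here I would invoke the submodularity (uncrossing) inequality $d(B)+d(C)\ge d(B\cap C)+d(B\cup C)$, together with the minimality of $|A|$: any tight cut appearing after the switch can be uncrossed against $[A,\overline A]$ to produce a tight cut whose small side lies strictly inside $A$, contradicting the choice of $[A,\overline A]$.

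Granting this bookkeeping, each such switch yields a $(\mu-1)$-edge-connected realization of $S$ with strictly fewer tight cuts, contradicting the extremal choice unless no tight cut exists; in the latter case the realization is $\mu$-edge-connected, which closes the induction. The only genuinely delicate part is controlling the global effect of the local switch, i.e.\ showing via the extremal choice and the uncrossing inequality that no equally small (or smaller) tight cut is introduced and that edge connectivity never drops below $\mu-1$; the degree count and the nonemptiness of the four parts $A_X,A_Y,\overline A_X,\overline A_Y$ are routine.
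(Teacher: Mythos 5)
Your proposal follows the same general plan as the paper's proof --- an extremal realization repaired by a degree-preserving $2$-switch across a small cut --- and your base case, the nonemptiness of $A_X,A_Y,\overline{A}_X,\overline{A}_Y$, and the existence of internal edges $ab$, $cd$ are all fine (the availability of the non-edges $ad,cb$ is also fixable, even more easily than you argue, since every $A_X$--$\overline{A}_Y$ adjacency is one of the $\mu-1$ cut edges). The genuine gap is exactly the step you flag and then grant yourself: the uncrossing argument does not do what you claim. Compute the effect of the switch $\{ab,cd\}\to\{ad,cb\}$ on an arbitrary cut: $d'(B)-d(B)\in\{0,+2,-2\}$ (it is even because all degrees are preserved), and the size \emph{drops}, by exactly $2$, precisely for cuts with $a,d\in B$ and $b,c\in\overline{B}$ (up to complementation). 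So you must exclude every such $B$ with $d(B)\le\mu+1$: the case $d(B)=\mu$ would drop the connectivity of $G'$ to $\mu-2$, and $d(B)=\mu+1$ would create a new tight cut. Uncrossing against $A$ succeeds only when $d(B)=\mu-1$: then $d(A)+d(B)=2\mu-2$, submodularity and $(\mu-1)$-edge-connectivity force $d(A\cap B)=d(A\cup B)=\mu-1$, and $A\cap B$ (which contains $a$ and misses $b$) is a tight cut with strictly smaller small side, contradicting the choice of $A$. But for $d(B)=\mu$ the two submodular inequalities $d(A\cap B)+d(A\cup B)\le 2\mu-1$ and $d(A\setminus B)+d(B\setminus A)\le 2\mu-1$ can be satisfied with $d(A\cup B)=d(B\setminus A)=\mu-1$ and $d(A\cap B)=d(A\setminus B)=\mu$; the forced tight corners then lie on the wrong side of $A$, so the minimality of $|A|$ yields no contradiction. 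For $d(B)=\mu+1$ the sum is $2\mu$ and both corners may equal $\mu$, so nothing is forced at all. Since your $a,b,c,d$ are arbitrary internal edges subject only to bipartition sides and non-adjacency, nothing prevents such a $B$ from existing for every admissible choice; the claim that the tight-cut count strictly decreases and connectivity never drops is therefore unproved, and it is the crux, not bookkeeping.

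It is instructive that the paper does not attempt any uncrossing here (nor an induction on $\mu$; it takes $r$ to be the maximum edge connectivity over all realizations, assumes $r\le\mu-1$, notes $r\ge2$ by Theorem~\ref{2realization}, and minimizes the number of $r$-edge cuts). The work your sketch is missing is done there by constraining the switched pair structurally: the paper first shows that both components $G_1,G_2$ of $G\setminus F$ are bridgeless (for $r=2$ by re-realizing the degree sequence of a side bridgelessly via Theorem~\ref{2realization}, which strictly decreases the number of $r$-edge cuts; for $r\ge3$ by a direct cut count), and then switches edges $e_1,e_2$ chosen on circuits of $G_1$ and $G_2$ whose endpoints have all their neighbors inside their own side. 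It is this choice of a ``deep'' switch pair on circuits --- not a submodularity computation --- that supports the conclusion that $F$ is destroyed and no new $r$-edge cut appears. To repair your argument you would need to import a comparably constrained choice of $ab$ and $cd$ (or a stronger extremal measure than the number of tight cuts); as written, the decisive local-to-global step fails.
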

\begin{proof}{
Let  $r$ be the maximum edge connectivity
among all realizations of the bipartite graphic sequence $S$ and $r\le \mu-1$. By Theorem~\ref{2realization}, $r\ge 2$.
 Also, let $G=(X,Y)$ be a bipartite realization of $S$ with the edge connectivity $\kappa^{'}(G)=r,$ and $G$
 has the minimum number of $r$-edge cuts.   Assume that $F=\{e_1,e_2,\ldots,e_r\}$ is an $r$-edge cut of $G$. Therefore, $G\setminus F$ has exactly two components $G_1$ and $G_2$.\\
First, we show that $G_1$ and $G_2$ are bridgeless. Otherwise, without loss of generality, assume that $e=uv\in E(G_1)$ is a cut edge of $G_1$ and $G_{11}$ and $G_{12}$ are components of $G_1\setminus \{e\}$.\\
 If $r=2$ and $S_1$ is the bipartite degree sequence of $G_1$, then 
  by Theorem~\ref{2realization}, there is a bridgeless bipartite graph $G_1'$ with the degree sequence $S_1$. Thus, $G'=(G\setminus E(G_1))\cup E(G_1')$ is a realization of $S$ with the same edge connectivity as $G$ and the number of its $r$-edge cuts is less than the  number of $r$-edge cuts of $G$, which is a contradiction.  \\
If $r\ge 3$ and $F_i$ is the edges between $G_{1i}$ and $G_2$, $i=1,2$, then $F=F_1\cup F_2$ and for some $i$, say $i=2$, $|F_2|\ge 2$.
  Therefore, $F'=F_1\cup\{e\}$ is an edge cut of size at most $r-1$, which is a contradiction.
  Thus, $G_1$ and $G_2$ are bridgeless.
  Hence, in the bridgeless components $G_1$ and $G_2$, every edge lies in a circuit.\\
 Since $\delta(G)\ge \mu$,  for every
$v_i\in V(G_i),\ i=1,2,$ there exists a vertex $v'_i\in V(G_i)\cap N_G(v)$ such that $N_G(v'_i)\subseteq V(G_i)$; so,
there exists a vertex $v''_i\in V(G_i)\cap N_G(v'_i)$ such that $N_G(v''_i)\subseteq V(G_i)$.\\
Let $v_i\in V(G_i)\cap X$ and $C_i$ be a circuit in $G_i$ such that $e_i=v'_iv''_i\in E(C_i)$, $i=1,2$.
 Now by switching two edges $e_1$ and $e_2$ with two edges
$v'_1v''_2$ and $v'_2v''_1$, we obtain a bipartite graph $G'$ with the same degree sequence as $G$ in which
 $F$ is not an $r$-edge cut anymore, and  no new  $r$-edge cut is appeared.  This contradicts
the minimality of the number of $r$-edge cuts in $G$.
Therefore, $r\ge \mu$ and this complete the proof.
}\end{proof}
Mahdian $\rm et\ al.$ showed that there exists an infinite
family of $\mu$--simultaneous edge colorable graphs.
In the rest of this section we consider  $\mu$--simultaneous edge colorings of complete graphs, complete bipartite graphs and
some graph operations such as join and graph product.
\begin{lem}~{\rm\cite{Mahdian}}\label{r-regular}
Every $r$-regular $1$-factorable graph is $\mu$--simultaneous edge colorable for
every $\mu\le r$.
\end{lem}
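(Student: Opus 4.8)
The plan is to exploit the $1$-factorization of $G$ directly and to build the $\mu$ colorings as cyclic shifts of the canonical proper $r$-edge-coloring. Since $G$ is $r$-regular and $1$-factorable, I would write $G=H_1\cup H_2\cup\cdots\cup H_r$ as an edge-disjoint union of $1$-factors (perfect matchings). Take the color set to be $[r]=\{1,\ldots,r\}$, identified with the cyclic group $\Bbb{Z}_r$, and for each $k$ with $1\le k\le \mu$ define a coloring $c_k$ by setting $c_k(e)=i+(k-1)\pmod r$ whenever $e\in H_i$, with residues taken in $[r]$. Thus $(c_1,\ldots,c_\mu)$ is the candidate $\mu$--simultaneous edge coloring.

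Next I would verify the defining conditions. First, each $c_k$ is a proper edge coloring: its color classes are exactly the $1$-factors $H_1,\ldots,H_r$ after relabeling the indices by the bijection $i\mapsto i+(k-1)$ of $\Bbb{Z}_r$, and since each $H_i$ is a matching, no two incident edges share a color. Second, because $G$ is $r$-regular and the $H_i$ partition $E(G)$, every vertex $v$ meets exactly one edge from each $1$-factor; hence in every coloring $c_k$ the set of colors at $v$ is $\{\,i+(k-1):1\le i\le r\,\}=[r]$, the full color set. In particular this set equals $[r]$ at $v$ in all $\mu$ colorings, so the ``same color set at each vertex'' condition holds trivially.

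The only place where the hypothesis $\mu\le r$ is genuinely used is the last condition, that no edge receives the same color in two of the colorings. For a fixed edge $e\in H_i$ and two indices $1\le k<k'\le\mu$, we have $c_k(e)=c_{k'}(e)$ if and only if $k\equiv k'\pmod r$; but $0<k'-k\le \mu-1\le r-1<r$, so this never occurs, and hence $c_k(e)\ne c_{k'}(e)$. This is precisely the pigeonhole constraint forcing $\mu\le r$ (equivalently $\mu\le\delta(G)$, as already recorded in the Observation), and it is the single point one must watch. I expect no deeper obstacle: the construction is explicit, and once the cyclic-shift idea is in place the three verifications are routine.
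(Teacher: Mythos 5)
Your proof is correct: the cyclic-shift construction $c_k(e)=i+(k-1)\pmod r$ on a $1$-factorization $H_1,\ldots,H_r$ verifies all three conditions of Definition~1, with $\mu\le r$ used exactly where you say, in guaranteeing the shifts $0,\ldots,\mu-1$ are distinct modulo $r$. The paper itself states this lemma without proof, citing Mahdian et al., and your argument is precisely the standard one underlying that reference (it also shows $\chi'_{\mu-SE}(G)=r$ for such graphs), so there is nothing to flag.
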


For example every complete graph $K_{2l}$, $l\ge 2$, is $\mu$--simultaneous edge colorable for
every $\mu\le 2l-1$; every complete bipartite graph $K_{n,n}$, $n\ge 2$, is $\mu$--simultaneous edge colorable for
every $\mu\le n$; every complete multipartite graph $K_{r_1,r_2,\ldots,r_n}$, when $r_1=\cdots=r_n=r,n\ge 2$, and $rn$ is even,
 is $\mu$--simultaneous edge colorable for every $\mu\le (n-1)r$; and every hypercube graph $Q_n$, $n\ge 1$, is $\mu$--simultaneous edge colorable for every $\mu\le n$.
\begin{lem}{\rm~\cite{MR1904722}}
If $\min\{m, n\}\ge  \mu$, then there exists an $m \times n$ $\mu$-way Latin trade of
volume $mn$.
\end{lem}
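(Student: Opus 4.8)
The plan is to exhibit the trade by an explicit construction and then verify the three defining conditions directly. First I would reduce to the case $m\le n$. Both the hypothesis $\min\{m,n\}\ge\mu$ and the conclusion are symmetric under transposition: transposing each of the $\mu$ partial Latin squares interchanges the roles of rows and columns and carries an $m\times n$ trade of volume $mn$ to an $n\times m$ trade of the same volume. Hence it suffices to treat $m\le n$, so that $\mu\le m\le n$.

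Next I would fix a base array. Index rows by $i\in\{0,\ldots,m-1\}$ and columns by $j\in\{0,\ldots,n-1\}$, take symbols in $\Bbb{Z}_n$, and set $T_0(i,j)=(i+j)\bmod n$, viewed as a partial Latin square of order $n$ whose top--left $m\times n$ box is filled. This $T_0$ is an $m\times n$ Latin rectangle: every row is a full permutation of $\Bbb{Z}_n$, and every column consists of $m$ distinct symbols because $m\le n$. In particular all $m$ row--sets equal the whole of $\Bbb{Z}_n$, and it is exactly this equality that makes the construction go through.

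The construction itself is a cyclic shift of the rows: for $k=0,1,\ldots,\mu-1$ set $T_k(i,j)=T_0\big((i+k)\bmod m,\ j\big)$. Then I would check, in order: (i) each $T_k$ is a partial Latin square, which is immediate since it is obtained from the Latin rectangle $T_0$ by permuting its rows, and all $\mu$ arrays share the same $mn$ filled cells; (ii) the cell condition, that for each filled cell $(i,j)$ the entries $T_0\big((i+k)\bmod m,j\big)$ for $0\le k\le\mu-1$ are pairwise distinct---this holds because $(i+k)\bmod m$ takes $\mu$ distinct values (using $\mu\le m$) and column $j$ of $T_0$ has distinct entries; (iii) the row condition, which is automatic because row $i$ of $T_k$ is a row of $T_0$ and every row of $T_0$ is set--wise equal to $\Bbb{Z}_n$; and (iv) the column condition, which holds because column $j$ of $T_k$ is column $j$ of $T_0$ with its entries merely reordered, hence set--wise unchanged. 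Since the shape is the full $m\times n$ box, the volume is $mn$, as required.

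The only genuine subtlety---the step I expect to be the crux---is reconciling the row and column conditions simultaneously. The obvious attempt of shifting the symbol alphabet, $T_k(i,j)=(i+j+k)\bmod n$, preserves every row--set but destroys the column--sets when $m<n$, since a column then becomes a shifted length-$m$ window in $\Bbb{Z}_n$. The fix above is to permute whole rows rather than symbols: reordering rows leaves each column--set invariant for free, while the row--sets stay equal precisely because the base rectangle was chosen with every row equal to $\Bbb{Z}_n$---which is exactly what forces the preliminary reduction to $m\le n$. The distinctness at each cell is then the single place where the full strength of $\mu\le\min\{m,n\}$ is consumed.
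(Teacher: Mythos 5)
Your construction is correct: after the transposition reduction to $m\le n$, cyclically shifting the rows of the Latin rectangle $T_0(i,j)=(i+j)\bmod n$ yields $\mu$ arrays sharing the full $m\times n$ shape, with every row set equal to $\Bbb{Z}_n$, every column set invariant under the row permutation, and pairwise distinct entries in each cell since $\mu\le m$ and each column of $T_0$ has distinct entries (as $m\le n$). Note that the paper states this lemma only as a citation to Adams, Billington, Bryant and Mahmoodian and supplies no proof of its own; your cyclic row-shift argument is essentially the standard construction from that source, and it is complete and self-contained as written.
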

\begin{cor}\label{K_r,s}
Every $K_{n,m}$ admits a $\mu$--simultaneous edge coloring, for $\mu \le  \min\{m, n\}$ and $n,m\ge 2$. Moreover,  $\chi'_{\mu-SE}(K_{n,m})=\max \{m, n\}$.
\end{cor}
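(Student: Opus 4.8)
The plan is to treat the two assertions separately: first that $K_{n,m}$ is $\mu$--simultaneous edge colorable, and then that its $\mu$-$SE$ chromatic number is exactly $\max\{m,n\}$. The colorability is immediate from the preceding lemma: since $\min\{m,n\}\ge\mu$ there is an $m\times n$ $\mu$--way Latin trade of volume $mn$, i.e. one in which every cell is filled. Feeding this into the Latin-trade-to-graph correspondence recorded earlier in this section produces a $\mu$--simultaneous edge colorable bipartite graph whose bipartite degree sequence has every entry equal to $m$ on one side and $n$ on the other, which is precisely $K_{n,m}$. So $K_{n,m}$ is $\mu$--simultaneous edge colorable whenever $\mu\le\min\{m,n\}$.

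For the value I would prove two inequalities. The lower bound is free from the Observation, whose chain begins $\Delta(G)\le\chi'(G)\le\cdots\le\chi'_{\mu-SE}(G)$: taking $G=K_{n,m}$ gives $\chi'_{\mu-SE}(K_{n,m})\ge\Delta(K_{n,m})=\max\{m,n\}$.

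The real work is the matching upper bound, for which I would exhibit an explicit coloring rather than lean on the existence lemma (which does not control the number of symbols used). Assume without loss of generality $n\le m$, index the parts as $X=\{x_0,\dots,x_{n-1}\}$ and $Y=\{y_0,\dots,y_{m-1}\}$, take the color set $\Bbb{Z}_m$, and for $1\le k\le\mu$ define
$$c_k(x_iy_j)=\big((i+k-1)\bmod n\big)+j \pmod m.$$
I would then verify the three requirements in turn. Properness: for fixed $i$ the map $j\mapsto c_k(x_iy_j)$ is a bijection of $\Bbb{Z}_m$, and for fixed $j$ the $n$ values are distinct since $i\mapsto (i+k-1)\bmod n$ permutes $\{0,\dots,n-1\}$ and $n\le m$. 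Equal palettes: the set of colors at $x_i$ is all of $\Bbb{Z}_m$ in every coloring, while the set at $y_j$ is $\{j,j+1,\dots,j+n-1\}\pmod m$ in every coloring, because permuting the index $i$ does not change the image set. Distinctness across colorings: $c_k(x_iy_j)=c_{k'}(x_iy_j)$ reduces to $\big((i+k-1)\bmod n\big)\equiv\big((i+k'-1)\bmod n\big)\pmod m$; as both sides lie in $\{0,\dots,n-1\}$ with $n\le m$ they are equal as integers, forcing $k\equiv k'\pmod n$, and since $1\le k,k'\le\mu\le n$ this gives $k=k'$. Thus $(c_1,\dots,c_\mu)$ is a $\mu$--simultaneous edge coloring with $m=\max\{m,n\}$ colors, so $\chi'_{\mu-SE}(K_{n,m})\le\max\{m,n\}$, and combined with the lower bound we get equality.

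The only genuine obstacle is pinning the number of colors to exactly $\max\{m,n\}$ while honoring all $\mu$ colorings at once: the existence lemma yields a full Latin trade but is silent on how many symbols it uses. The explicit residue construction sidesteps this, and its crux is the distinctness condition, which amounts to saying that the rows $i\mapsto (i+k-1)\bmod n$ for $1\le k\le\mu$ form a $\mu\times n$ Latin rectangle — available precisely because $\mu\le n=\min\{m,n\}$, exactly matching the hypothesis.
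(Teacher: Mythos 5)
Your proof is correct, and it is more self-contained than the paper's. The paper offers no written proof of this corollary: the colorability assertion is meant to follow, exactly as in your first paragraph, from the cited lemma (existence of an $m\times n$ $\mu$-way Latin trade of volume $mn$ when $\mu\le\min\{m,n\}$) together with the trade-to-graph correspondence set out at the start of Section 2, and the value $\chi'_{\mu-SE}(K_{n,m})=\max\{m,n\}$ is implicitly obtained from the lower bound $\chi'_{\mu-SE}\ge\Delta$ of the Observation plus the fact that the trades constructed in the cited reference live on a symbol set of size $\max\{m,n\}$. You correctly identify that the lemma \emph{as stated in this paper} does not control the number of symbols, and you close that gap with the explicit coloring $c_k(x_iy_j)=\bigl((i+k-1)\bmod n\bigr)+j \pmod m$ (for $n\le m$), which I have checked: properness at each $x_i$ and $y_j$, equality of palettes ($\Bbb{Z}_m$ at $x_i$, the cyclic interval $\{j,\dots,j+n-1\}\pmod m$ at $y_j$, both independent of $k$), and distinctness across colorings (since both shifted values lie in $\{0,\dots,n-1\}$ with $n\le m$, coincidence mod $m$ forces $k\equiv k'\pmod n$, hence $k=k'$ as $\mu\le n$) all go through. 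In effect your residue construction is a hands-on proof of the special case of the cited lemma that the corollary actually needs, with the symbol count pinned to $\max\{m,n\}$; what the paper's route buys is brevity by citation, and what yours buys is a verifiable construction and an honest justification of the ``moreover'' clause, which the paper leaves to the reader.
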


The {\sf join} of two simple graphs $G$ and $H$, $G\vee H$, is the graph obtained from
the disjoint union of $G$ and $H$ by adding the edges $\{uv :  u\in V(G),\ v\in V(H)\}$.
\begin{thm}\label{join}
Let $G_i$ be a $\mu$--simultaneous edge colorable graph of order $n_i\ge 2$. The join graph $G_1 \vee G_2$ has a $\mu$--simultaneous edge coloring.
\end{thm}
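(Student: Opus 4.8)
The plan is to exploit the natural edge decomposition of the join. Write $E(G_1 \vee G_2) = E(G_1) \cup E(G_2) \cup E(H)$, where $H$ is the complete bipartite graph $K_{n_1, n_2}$ formed by the edges added between $V(G_1)$ and $V(G_2)$. I would equip each of these three pieces with its own $\mu$-simultaneous edge coloring and then splice them together. By hypothesis $G_1$ and $G_2$ each carry a $\mu$-simultaneous edge coloring. For $H$ I would invoke Corollary~\ref{K_r,s}; to apply it I must first check that $\mu \le \min\{n_1, n_2\}$. This is exactly where the Observation enters: since $G_i$ is $\mu$-simultaneous edge colorable, $\mu \le \delta(G_i) \le n_i - 1 < n_i$, so $\mu \le \min\{n_1, n_2\}$, and (as $n_1, n_2 \ge 2$) Corollary~\ref{K_r,s} supplies a $\mu$-simultaneous edge coloring of $H$.

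The splicing step is to use three pairwise disjoint color sets $A_1$, $A_2$, $B$ for $G_1$, $G_2$, $H$ respectively. Let $(c^{(1)}_1, \ldots, c^{(1)}_\mu)$, $(c^{(2)}_1, \ldots, c^{(2)}_\mu)$, and $(d_1, \ldots, d_\mu)$ be the $\mu$-simultaneous edge colorings of $G_1$, $G_2$, $H$ over these disjoint palettes. For each $k$, define $c_k$ on $G_1 \vee G_2$ to agree with $c^{(1)}_k$ on $E(G_1)$, with $c^{(2)}_k$ on $E(G_2)$, and with $d_k$ on $E(H)$. I claim $(c_1, \ldots, c_\mu)$ is a $\mu$-simultaneous edge coloring of the join, and the disjointness of the palettes is what makes every verification collapse to the corresponding property of a single piece.

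Checking the conditions of Definition~\ref{def:k--SE} is then routine. Each $c_k$ is proper: within a single part it is proper by construction, and at any vertex $v \in V(G_1)$ the colors on the $G_1$-edges lie in $A_1$ while those on the edges of $H$ lie in $B$, so the two groups cannot clash; the same holds at vertices of $G_2$. The requirement that no edge repeats a color across two colorings holds part-by-part, since it already holds inside each of the three component colorings. For the color-set-constancy condition at a vertex $v \in V(G_1)$, the set of colors appearing at $v$ under $c_k$ splits as $S^{G_1}_k(v) \cup S^{H}_k(v)$ with $S^{G_1}_k(v) \subseteq A_1$ and $S^{H}_k(v) \subseteq B$; because $A_1 \cap B = \emptyset$ these two pieces are recovered from the union independently, and each is independent of $k$ --- $S^{G_1}_k(v)$ by the simultaneity of the coloring of $G_1$ and $S^{H}_k(v)$ by that of $H$ --- so the total set is independent of $k$, and symmetrically for $v \in V(G_2)$.

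The only point demanding genuine attention is the applicability of Corollary~\ref{K_r,s}, namely the inequality $\mu \le \min\{n_1, n_2\}$ extracted from the Observation; once the disjoint-palette idea is fixed, everything else is bookkeeping. I would therefore present the argument by stating the three disjoint color sets explicitly, defining the combined colorings $c_k$, and then verifying the two bullet points of Definition~\ref{def:k--SE} together with properness in the order above.
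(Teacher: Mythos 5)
Your proposal is correct and follows essentially the same route as the paper: decompose $E(G_1\vee G_2)$ into $E(G_1)$, $E(G_2)$, and the copy of $K_{n_1,n_2}$, color each piece by its own $\mu$--simultaneous edge coloring (the bipartite part via Corollary~\ref{K_r,s}, justified by $\mu\le\delta(G_i)<n_i$), and keep the palette of $K_{n_1,n_2}$ disjoint from those of the $G_i$. The only cosmetic difference is that you make all three palettes pairwise disjoint while the paper lets $G_1$ and $G_2$ share colors (harmless, as they are vertex-disjoint); your write-up also verifies Definition~\ref{def:k--SE} in more detail than the paper does.
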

\begin{proof}{
Since $G_i$'s has a $\mu$--simultaneous edge coloring, for $\mu \le \min \{n_1,n_2\}$, by Corollary~\ref{K_r,s},
 $K_{n_1,n_2}$ has a $\mu$--simultaneous edge coloring. Now we
define  a $\mu$--simultaneous edge coloring of $G_1 \vee G_2$ by a $\mu$--simultaneous edge coloring of  the copy $G_i$ in
$G_1 \vee G_2$ with the color set $\{1,\ldots,\chi'_{\mu-SE}(G_i)\}$, $i=1,2$, and a
$\mu$--simultaneous edge coloring of the copy $K_{n_1,n_2}$ in $G_1 \vee G_2$ with the color set $\{r+1,\ldots,r+\Delta(K_{n_1,n_2})\}$, where
$r=\max \{\chi'_{\mu-SE}(G_1),\chi'_{\mu-SE}(G_2)\}$.
}\end{proof}
\begin{prop}\label{prop:K_7}
The complete graphs $K_7$ and $K_9$ admit a $\mu$--simultaneous edge coloring, for $\mu=2,3$.
\end{prop}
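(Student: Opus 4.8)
The plan is to use the correspondence, described in Section~2, between symmetric $\mu$-way Latin trades with no filled diagonal cells and $\mu$-simultaneous edge colorable graphs. Since $K_7$ and $K_9$ have odd order they are not $1$-factorable, so Lemma~\ref{r-regular} does not apply and a genuine construction is needed. Under that correspondence, a $\mu$-simultaneous edge coloring of $K_n$ is exactly a symmetric $\mu$-way Latin trade $(T_1,\ldots,T_\mu)$ of order $n$ whose shape is the set of all off-diagonal cells $\{(i,j):i\neq j\}$ (volume $n(n-1)$, hence $n(n-1)/2$ edges, i.e. $K_n$). Filling the diagonal of each $T_t$ with the symbol missing from its $i$-th row, which by the row-set and column-set conditions is common to all $T_t$ and makes each $T_t$ a full symmetric Latin square, turns this into the equivalent problem: find $\mu$ symmetric Latin squares $L_1,\ldots,L_\mu$ of order $n$ that share one common diagonal and satisfy $L_s(i,j)\neq L_t(i,j)$ for all $i\neq j$ and all $s\neq t$. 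Equivalently, I seek $\mu$ proper edge colorings of $K_n$ with common color set $\Bbb{Z}_n$ in which every vertex misses the same color in all colorings and no edge keeps its color between two colorings.

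For the base coloring I would take $c_1(\{i,j\})=i+j$ in $\Bbb{Z}_n$; since $n\in\{7,9\}$ is odd, $2$ is a unit, so $c_1$ is proper, vertex $v$ misses exactly the color $2v$, and the map $v\mapsto 2v$ is a bijection, giving a transversal diagonal. The case $\mu=2$ then reduces to producing one further coloring $c_2$ with the same missing-color function $v\mapsto 2v$ and with $c_2(e)\neq c_1(e)$ on every edge; I would exhibit $c_2$ explicitly as a second symmetric Latin square with the same diagonal as the addition table and disjoint from it off the diagonal, then check directly that each row and column is a permutation of $\Bbb{Z}_n$ and that the two squares never agree off the diagonal. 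A convenient viewpoint for finding such a $c_2$ is to pass to $K_{n+1}$ on the vertex set $\Bbb{Z}_n\cup\{\infty\}$, where $c_1$ becomes the standard starter $1$-factorization; one then looks for a second $1$-factorization of $K_{n+1}$ sharing the star at $\infty$ with the first but edge-disjoint from it on the remaining $K_n$.

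The case $\mu=3$ is where I expect the real work to lie, and I would again proceed by explicit construction, adjoining a third symmetric Latin square $L_3$ of order $n$ with the same diagonal and disjoint off the diagonal from both $L_1$ and $L_2$. This is the main obstacle: at each edge $\{i,j\}$ the three colors must be distinct and all lie in the admissible set $\Bbb{Z}_n\setminus\{2i,2j\}$, which has only $n-2$ elements, so the three near-$1$-factorizations must be mutually edge-disjoint while respecting a single common missing-color pattern, a tight global constraint with no evident closed algebraic form. (A naive linear ansatz $a(i+j)$ fails because it alters the missing colors $v\mapsto 2av$, and adding a symmetric quadratic correction $\lambda(i-j)^2$ preserves the diagonal but destroys the Latin property, since $\lambda t^2+t$ is never injective for $\lambda\neq0$.) I would therefore display $L_1,L_2,L_3$, equivalently the partial Latin squares $T_1,T_2,T_3$, for $n=7$ and for $n=9$ as explicit arrays found by search, and verify the row, column, and pairwise-disjointness conditions directly; discarding $L_3$ from the same tables simultaneously settles the $\mu=2$ claim. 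The restriction to $n=7,9$, rather than all odd $n$, reflects exactly that these small orders are handled by such ad hoc constructions.
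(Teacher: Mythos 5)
Your reduction framework is sound: a $\mu$--simultaneous edge coloring of $K_n$ with $n$ colors is indeed the same object as a symmetric $\mu$--way Latin trade whose shape is all off-diagonal cells, equivalently $\mu$ symmetric Latin squares of order $n$ sharing a common transversal diagonal and pairwise disagreeing off the diagonal; your base square $c_1(\{i,j\})=i+j$ over $\Bbb{Z}_n$ is correct for odd $n$, and so is the observation that deleting one coordinate of a $3$--simultaneous coloring settles $\mu=2$. The genuine gap is at exactly the point you yourself flag as ``the main obstacle'': you never exhibit $c_2$, let alone $L_3$, for either $n=7$ or $n=9$. The entire content of the proposition is the existence of these witnesses. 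No cited result supplies them --- Theorem~\ref{r-regular} is unavailable since $K_7$ and $K_9$ are not $1$-factorable, as you note, and existence is not a soft fact: Theorem~\ref{spec3} shows that $\mu$--way Latin trades fail to exist at various volumes, and the paper itself exhibits a $3$--edge-connected bipartite graph with no $3$--simultaneous edge coloring, so the symmetric, fixed-diagonal, triply-disjoint configuration you need cannot be waved into existence. A sentence saying the arrays would be ``found by search'' and ``displayed'' is a promissory note, not a proof; as written, nothing verifiable is established for $\mu=3$, nor even for $\mu=2$, where your $c_2$ is likewise only promised.

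For comparison, the paper's proof consists precisely of the missing data: it lists, edge by edge, an explicit triple $(c_1,c_2,c_3)$ of proper colorings of $K_7$ with colors $\{1,\ldots,7\}$ and of $K_9$ with colors $\{1,\ldots,9\}$ satisfying both simultaneity conditions (obtained with computer assistance, per the acknowledgments), and the $\mu=2$ case follows by restriction, just as you propose. So your plan is the paper's approach --- explicit construction --- minus the construction itself. To complete it you must actually print the three squares $L_1,L_2,L_3$ (or the edge-by-edge color triples) for each of $n=7$ and $n=9$ and verify the row, column, common-diagonal, and pairwise-disagreement conditions; your algebraic framing would then be a pleasant repackaging, but it carries none of the existence burden on its own.
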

\begin{proof}{
Let $V(K_7)=\{v_1,v_2,\ldots,v_7\}$ be the vertex set of $K_7$.
The following colorings, $(c_1,c_2,c_3)$, is a $3$--simultaneous edge coloring of $K_7$,  where $c_\mu$, is a proper edge coloring of $K_7$ with color set $\{1,2,\ldots,7\}$, and  $v_iv_j: l_1,l_2,l_3$ means $c_{\mu}(v_iv_j)=l_\mu$,  $\mu=1,2,3$.\\
$v_1v_2:5,7,6;\ \ v_1v_3:2,3,1;\ \ v_1v_4:3,2,7;\ \ v_1v_5:6,1,5;\ \ v_1v_6:7,6,3;\ \ v_1v_7:1,5,2;$\\
$v_2v_3:7,2,5;\ \ v_2v_4:6,1,4;\ \ v_2v_5:1,6,7;\ \ v_2v_6:4,5,2;\ \ v_2v_7:2,4,1;$\\
$v_3v_4:1,7,2;\ \ v_3v_5:4,5,3;\ \ v_3v_6:5,4,7;\ \ v_3v_7:3,1,4;$\\
$v_4v_5:7,4,1;\ \ v_4v_6:2,3,6;\ \ v_4v_7:4,6,3 ;$\\
$v_5v_6:3,7,4;\ \ v_5v_7:5,3,6;$\\
$v_6v_7:6,2,5.$
\\
Let $V(K_9)=\{v_1,v_2,\ldots,v_9\}$ be the vertex set of $K_9$.
The following colorings, $(c_1,c_2,c_3)$, is a $3$--simultaneous edge coloring of $K_9$,  where $c_\mu$, is a proper edge coloring of $K_9$ with color set $\{1,2,\ldots,9\}$, and  $v_iv_j: l_1,l_2$ means $c_{\mu}(v_iv_j)=l_\mu$,  $\mu=1,2,3$.\\
$v_1v_2:2,3,4;\ \ v_1v_3:1,4,3;\ \ v_1v_4:4,1,2;\ \ v_1v_5:3,2,6;\ \ v_1v_6:6,7,8;\ \ v_1v_7:7,8,5;\ \ v_1v_8:8,5,1;\ \ v_1v_9:5,6,7;$\\
$v_2v_3:9,5,6;\ \ v_2v_4:5,9,7;\ \ v_2v_5:6,7,8;\ \ v_2v_6:3,8,9;\ \ v_2v_7:8,4,2;\ \ v_2v_8:4,6,5;\ \ v_2v_9:7,2,3;$\\
$v_3v_4:6,7,9;\ \ v_3v_5:7,8,5;\ \ v_3v_6:8,6,1;\ \ v_3v_7:4,1,7;\ \ v_3v_8:5,3,8;\ \ v_3v_9:3,9,4;$\\
$v_4v_5:8,6,1;\ \ v_4v_6:7,2,4;\ \ v_4v_7:1,5,8;\ \ v_4v_8:9,8,6;\ \ v_4v_9:2,4,5;$\\
$v_5v_6:9,1,7;\ \ v_5v_7:5,3,9;\ \ v_5v_8:2,9,3;\ \ v_5v_9:1,5,2;$\\
$v_6v_7:2,9,3;\ \ v_6v_8:1,4,2;\ \ v_6v_9:4,3,6;$\\
$v_7v_8:3,2,4;\ \ v_7v_9:9,7,1;$\\
$v_8v_9:6,1,9.$
}\end{proof}

\begin{thm}
Every complete graph $K_n$, except for $n=2,3,5$  admits a $\mu$--simultaneous edge coloring, for $\mu=2,3$.
\end{thm}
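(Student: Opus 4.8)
The plan is to reduce the statement to a handful of base cases together with a single multiplicative construction, namely the join operation from Theorem~\ref{join}. The key observation is that a complete graph decomposes as a join of smaller complete graphs: for any $a+b=n$ we have $K_n = K_a \vee K_b$. Thus if both $K_a$ and $K_b$ are already known to be $\mu$--simultaneous edge colorable (with $a,b\ge 2$ so that Theorem~\ref{join} applies), then $K_n$ is too. This lets me bootstrap from a small set of directly verified cases to all larger $n$.

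First I would assemble the base cases that cannot be obtained as a join of two admissible pieces, or that are needed to seed the induction. The excerpt already hands me several: by Lemma~\ref{r-regular} every $K_{2l}$ with $l\ge 2$ is $\mu$--simultaneous edge colorable for all $\mu\le 2l-1$, so $K_4,K_6,K_8,\dots$ are settled for $\mu=2,3$ outright. For the odd complete graphs, Proposition~\ref{prop:K_7} supplies explicit colorings of $K_7$ and $K_9$ for $\mu=2,3$. The excluded values $n=2,3,5$ match exactly the known obstructions for small volumes ($K_2,K_3$ have too few edges or vertices to admit two disjoint proper colorings with the color-set condition, and $K_5$ fails as well), so the statement is consistent with not trying to prove those.

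Next I would run the induction. Every even $n\ge 4$ is handled directly by Lemma~\ref{r-regular}. For odd $n\ge 11$, I would write $K_n = K_4 \vee K_{n-4}$, where $n-4\ge 7$ is odd; since $K_4$ is admissible (even case) and $K_{n-4}$ is admissible by the induction hypothesis (with base cases $K_7,K_9$ from Proposition~\ref{prop:K_7}), Theorem~\ref{join} gives a $\mu$--simultaneous edge coloring of $K_n$ for $\mu=2,3$. I must check the side condition of Theorem~\ref{join}, that $\mu\le\min\{n_1,n_2\}$; here $\min\{4,n-4\}=4\ge 3\ge\mu$, so it is satisfied for $\mu=2,3$. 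This closes the odd case, stepping $7\to 11\to 15\to\cdots$ and $9\to 13\to 17\to\cdots$ so that every odd $n\ge 7$ is reached.

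The main obstacle is really the verification of the base cases rather than the inductive step, since the join construction is clean once its hypotheses hold. The delicate point is ensuring that $\mu=3$ is legitimately achievable on the small seeds: $K_7$ and $K_9$ each require three pairwise ``color-disjoint'' proper edge colorings that also agree set-wise at every vertex, which is precisely what Proposition~\ref{prop:K_7} certifies by explicit tables, so I would simply invoke it. One should also confirm that the odd gap below $11$ is fully covered, i.e.\ that no odd value between $7$ and the first inductive output is left out; since $K_7$ and $K_9$ are both given as base cases and $K_4\vee K_{n-4}$ covers every odd $n\ge 11$, the only odd values outside the covered range are $n=3,5$, exactly the excluded ones. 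Hence the statement follows.
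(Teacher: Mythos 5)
Your proposal is correct and follows essentially the same route as the paper: even $n\ge 4$ via Theorem~\ref{r-regular}, the explicit seeds $K_7$ and $K_9$ from Proposition~\ref{prop:K_7}, and induction through the join decomposition $K_n=K_{n-4}\vee K_4$ using Theorem~\ref{join} (the paper writes $K_{13}=K_7\vee K_6$ where you use $K_9\vee K_4$, an immaterial difference). Your check of the condition $\mu\le\min\{n_1,n_2\}$ is harmless but not actually required, since Theorem~\ref{join} only hypothesizes that each factor is $\mu$--simultaneous edge colorable of order at least $2$, and that bound holds automatically.
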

\begin{proof}{It is easy to check that $K_2$ and $K_3$ are not $2$--simultaneous edge colorable. In Proposition~\ref{prop:K_5}, we will show that $K_5$, has no $2$--simultaneous edge coloring.
By Theorem~\ref{r-regular}, $K_{2l},$ $l\ge 2$ admits a  $\mu$--simultaneous edge coloring, $\mu=2,3$.
Thus, by Proposition~\ref{prop:K_7} and Theorem~\ref{join}, $K_{11}=K_{7} \vee K_4$ and $K_{13}=K_{7} \vee K_6$ are  $\mu$--simultaneous edge colorable, $\mu=2,3$.  For every $n\ge 14$, we have $K_n=K_{n-4} \vee K_4$; hence  by induction on $n$, and Theorem~\ref{join}, $K_n$ admits a $\mu$--simultaneous edge coloring, $\mu=2,3$.
}\end{proof}

The {\sf Cartesian product} of two graphs $G$ and $H$, denoted by $G\square H$,
is the graph with vertex set $V(G)\times V(H)$ and two vertices $(u,v)$
and $(u',v')$ are  adjacent if and only if either $u=u'$  and  $vv'\in E(H)$
  or  $uu'\in E(G)$  and $v=v'$.


In the following theorems we present some sufficient conditions for $\mu$--simultaneous edge colorable of $G\square H$ in general.
\begin{thm}\label{thm:SEcartesian}
Let $G$ and $H$ be $r$-regular and $s$-regular graphs, respectively.
If $H$ is $1$-factorable, then $G\square H$ is $\mu$--simultaneous edge colorable for
every $\mu\le r+s$.
\end{thm}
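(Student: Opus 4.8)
The plan is to show that the hypotheses force $G\square H$ to be $(r+s)$-regular and $1$-factorable, after which Lemma~\ref{r-regular} immediately yields a $\mu$--simultaneous edge coloring for every $\mu\le r+s$. Since $G$ is $r$-regular and $H$ is $s$-regular, the product $G\square H$ is $(r+s)$-regular, so the entire difficulty is concentrated in producing a $1$-factorization into $r+s$ perfect matchings; the $1$-factorability of $H$ is exactly the extra hypothesis I expect to exploit.

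First I would fix a $1$-factorization $H=M_1\cup\cdots\cup M_s$ into perfect matchings. I would then separate $E(G\square H)$ into its $G$-edges (those joining $(u,v)$ to $(u',v)$ with $uu'\in E(G)$) and its $H$-edges. The $G$-edges form a spanning subgraph $J$ that is the disjoint union, over $v\in V(H)$, of copies $G_v$ of $G$; in particular $J$ is $r$-regular. Each matching $M_i$ of $H$ lifts to the set $\tilde M_i=\{(u,v)(u,v') : vv'\in M_i\}$, and because $M_i$ saturates every vertex of $H$, each $\tilde M_i$ is a perfect matching of $G\square H$. Thus $E(G\square H)=E(J)\cup\tilde M_1\cup\cdots\cup\tilde M_s$, exhibiting $G\square H$ as an $r$-regular graph $J$ together with $s$ perfect matchings.

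The key step is to absorb one matching, say $\tilde M_1$, into $J$ to create a $1$-factorable $(r+1)$-regular piece. For each edge $\{v,v'\}\in M_1$, the two copies $G_v,G_{v'}$ together with the matching edges $(u,v)(u,v')$ form a copy of the prism $G\square K_2$, and these prisms are vertex-disjoint and cover all of $V(G\square H)$ since $M_1$ is perfect; hence $J\cup\tilde M_1$ is a disjoint union of prisms over $G$. I would then prove that the prism $G\square K_2$ over any $r$-regular graph is $1$-factorable: by Vizing's theorem pick a proper edge coloring $c$ of $G$ with colors $\{1,\ldots,r+1\}$, so that at each vertex $u$ exactly one color $m(u)$ is missing; color both copies of $G$ in the prism by $c$ and color each rung $(u,0)(u,1)$ with $m(u)$. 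Then every vertex $(u,i)$ meets the $r$ colors present at $u$ on its $G$-edges together with $m(u)$ on its rung, so the coloring is a proper $(r+1)$-edge coloring in which every vertex meets all $r+1$ colors; each color class is therefore a perfect matching and the prism is $1$-factorable. Taking the union of the $j$-th factors over all prisms produces $r+1$ perfect matchings $N_1,\ldots,N_{r+1}$ of $J\cup\tilde M_1$.

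Combining $N_1,\ldots,N_{r+1}$ with $\tilde M_2,\ldots,\tilde M_s$ exhibits a $1$-factorization of $G\square H$ into $(r+1)+(s-1)=r+s$ perfect matchings, so $G\square H$ is $(r+s)$-regular and $1$-factorable, and Lemma~\ref{r-regular} completes the proof. I expect the main obstacle to be precisely the prism step: one must resist trying to $1$-factorize $J$ directly, which would require $G$ itself to be $1$-factorable (and $G$ need not be), and instead use the extra rung matching supplied by $H$ together with a Vizing coloring to manufacture the missing factor. The remainder is bookkeeping about how the matchings of $H$ lift to perfect matchings of $G\square H$.
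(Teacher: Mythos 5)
Your proposal is correct, and at the top level it follows the same skeleton as the paper---show that $G\square H$ is $(r+s)$-regular and $1$-factorable, then apply Theorem~\ref{r-regular}---but you handle the crucial step by a genuinely different, self-contained route. The paper's proof of $1$-factorability is essentially a citation: since $H$ is $1$-factorable, $\chi'(H)=\Delta(H)$, and a theorem of Mahmoodian~\cite{Mahmoodian} on edge-colorability of Cartesian products yields $\chi'(G\square H)=\Delta(G\square H)=r+s$, which for a regular graph is exactly $1$-factorability. You instead reprove the needed (regular) case of that theorem from scratch: you lift the $1$-factors of $H$ to perfect matchings $\tilde M_1,\ldots,\tilde M_s$ of the product, observe that the $G$-layers together with $\tilde M_1$ decompose into vertex-disjoint spanning prisms $G\square K_2$ (precisely because $M_1$ is a perfect matching of $H$), and $1$-factorize each prism via a Vizing $(r+1)$-edge-coloring of $G$, placing the unique color missing at each vertex $u$ on its rung. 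Every step checks out: $r$-regularity of $G$ guarantees exactly one missing color per vertex, both ends of a rung miss the same color (so the extended coloring stays proper), every prism vertex then meets all $r+1$ colors (so each color class is a perfect matching), and the resulting $r+1$ factors together with $\tilde M_2,\ldots,\tilde M_s$ constitute a $1$-factorization of $G\square H$ into $r+s$ perfect matchings, after which Theorem~\ref{r-regular} finishes as in the paper. What your argument buys is independence from the reference---it is in effect an elementary proof of the special case of Mahmoodian's theorem that the paper invokes as a black box, and it makes visible why the $1$-factorability of $H$ (rather than of $G$) suffices; what the paper's version buys is brevity, its proof being two lines once the citation is granted.
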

\begin{proof}{
Suppose that $G$ and $H$ are $r$-regular and $s$-regular graphs, respectively.
Therefore, $G\square H$ is an $(r+s)$-regular graph.
Since $H$ is $1$-factorable,  
we have $\chi'(H)=\Delta(H)$ and by a theorem in~\cite{Mahmoodian},    
 $\chi'(G\square H)=\Delta(G\square H)=r+s$. Thus
by Theorem~\ref{r-regular}, $G\square H$ is $\mu$--simultaneous edge colorable for
every $\mu\le r+s$.
}\end{proof}
\begin{cor}
\item{\rm (i)}
For every positive integers $n\ge 2$ and $m\ge 3$,
 $C_{2n}\square C_m$ is $\mu$--simultaneous edge colorable for
every $\mu\le 4$.
\item{\rm (ii)}
Let $G$  be $r$-regular. Then,  $G\square K_{2n}$, $n\ge 1$, is $\mu$--simultaneous edge colorable for
every $\mu\le r+2n-1$.
\end{cor}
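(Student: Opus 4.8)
The plan is to derive both parts directly from Theorem~\ref{thm:SEcartesian} by choosing the second factor to be the one that is $1$-factorable and invoking two classical edge-decomposition facts, so that $r$ and $s$ can simply be read off.

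For part (i), first I would use the commutativity of the Cartesian product, $C_{2n}\square C_m\cong C_m\square C_{2n}$, so that the \emph{even} cycle plays the role of $H$ in the theorem. Since $2n$ is even, $C_{2n}$ is $1$-factorable: its edges split into two perfect matchings (the ``odd'' and ``even'' edges around the cycle), so $C_{2n}$ is $2$-regular and $1$-factorable. Taking $G=C_m$ (which is $2$-regular, so $r=2$) and $H=C_{2n}$ (so $s=2$ and $H$ is $1$-factorable), Theorem~\ref{thm:SEcartesian} yields that $C_m\square C_{2n}$ is $\mu$--simultaneous edge colorable for every $\mu\le r+s=4$. Note that $1$-factorability is demanded only of $H$, so no parity hypothesis on $m$ is needed; this is exactly why the even cycle, not $C_m$, must be assigned to the $H$ slot.

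For part (ii), I would take $H=K_{2n}$. The complete graph on an even number of vertices is $1$-factorable: $K_{2n}$ admits a proper edge coloring with $2n-1$ colors whose color classes are perfect matchings (the standard round-robin near-one-factorization). Hence $K_{2n}$ is $(2n-1)$-regular and $1$-factorable, so with $s=2n-1$ and $G$ an arbitrary $r$-regular graph, Theorem~\ref{thm:SEcartesian} immediately gives that $G\square K_{2n}$ is $\mu$--simultaneous edge colorable for every $\mu\le r+s=r+2n-1$. The boundary case $n=1$ is covered as well, since $K_2$ is a single edge and thus trivially $1$-factorable with $s=1$.

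Because the two $1$-factorability facts are entirely standard, there is no genuine obstacle in this argument; the only point requiring care is to place the $1$-factorable factor in the $H$ position of Theorem~\ref{thm:SEcartesian}---using commutativity of $\square$ in part (i)---and then to identify the regularity parameters $r$ and $s$ correctly so that the bound $\mu\le r+s$ specializes to $4$ and to $r+2n-1$, respectively.
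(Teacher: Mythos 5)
Your proposal is correct and matches the paper's intended derivation: the corollary is stated as an immediate consequence of Theorem~\ref{thm:SEcartesian}, obtained exactly as you do by placing the $1$-factorable factor ($C_{2n}$, respectively $K_{2n}$) in the $H$ slot and reading off $r+s=4$ and $r+s=r+2n-1$. Your explicit remark that commutativity of $\square$ is needed in part (i) when $m$ is odd (since $C_m$ is then not $1$-factorable) is a point the paper leaves implicit, and it is handled correctly.
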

\begin{thm}
Let $G$ and $H$ be two $\mu$--simultaneous edge colorable graphs. The cartesian product $G\square H$ is also $\mu$--simultaneous edge colorable.
In particular, $\chi'_{\mu-SE}(G\square H)\le \chi'_{\mu-SE}(G)+\chi'_{\mu-SE}(H)$.
\end{thm}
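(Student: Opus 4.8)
The plan is to exploit the canonical edge decomposition of the Cartesian product and to color the two edge types from disjoint palettes, applying the given colorings factor-wise. Recall that $E(G\square H)$ partitions into two families: the \emph{$G$-edges} $(u,v)(u',v)$ with $uu'\in E(G)$, which for each fixed $v\in V(H)$ form a copy of $G$, and the \emph{$H$-edges} $(u,v)(u,v')$ with $vv'\in E(H)$, which for each fixed $u\in V(G)$ form a copy of $H$. Write $p=\chi'_{\mu-SE}(G)$ and $q=\chi'_{\mu-SE}(H)$, and fix a $\mu$--simultaneous edge coloring $(c_1^G,\ldots,c_\mu^G)$ of $G$ with color set $[p]$ and a $\mu$--simultaneous edge coloring $(c_1^H,\ldots,c_\mu^H)$ of $H$ with color set $[q]$.

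First I would define $\mu$ colorings $(d_1,\ldots,d_\mu)$ of $G\square H$, each with color set $[p+q]$, by coloring the $G$-edges from $[p]$ and the $H$-edges from the disjoint block $\{p+1,\ldots,p+q\}$:
$$d_k(e)=\left\{\begin{array}{ll} c_k^G(uu') & \mbox{if } e=(u,v)(u',v),\ uu'\in E(G),\\ p+c_k^H(vv') & \mbox{if } e=(u,v)(u,v'),\ vv'\in E(H),\end{array}\right.$$
for $1\le k\le\mu$. Each $d_k$ is a proper edge coloring: at a vertex $(u,v)$ the $G$-edge colors are $\{c_k^G(uu'):uu'\in E(G)\}$, which are distinct since $c_k^G$ is proper, the $H$-edge colors are $\{p+c_k^H(vv'):vv'\in E(H)\}$, distinct since $c_k^H$ is proper, and the two color blocks are disjoint. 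Likewise, no edge repeats a color across two of the $d_k$: a $G$-edge $(u,v)(u',v)$ receives $c_k^G(uu')$, and these differ for distinct $k$ because $(c_1^G,\ldots,c_\mu^G)$ is a $\mu$--simultaneous coloring; the same argument applies to $H$-edges via $(c_1^H,\ldots,c_\mu^H)$.

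The step deserving the most care is the first bullet of Definition~\ref{def:k--SE}: that the set of colors at each vertex is the same in every $d_k$. Here I would invoke the corresponding property of both factor colorings together with the palette-disjointness. Since $(c_1^G,\ldots,c_\mu^G)$ is $\mu$--simultaneous, the color set $A_u=\{c_k^G(uu'):uu'\in E(G)\}$ at $u$ is independent of $k$; similarly $B_v=\{c_k^H(vv'):vv'\in E(H)\}$ is independent of $k$. Hence the color set at $(u,v)$ in $d_k$ equals
$$A_u\cup\bigl(p+B_v\bigr),$$
which does not depend on $k$, as required. This establishes that $(d_1,\ldots,d_\mu)$ is a $\mu$--simultaneous edge coloring of $G\square H$ using $p+q$ colors, so $G\square H$ is $\mu$--simultaneous edge colorable and $\chi'_{\mu-SE}(G\square H)\le\chi'_{\mu-SE}(G)+\chi'_{\mu-SE}(H)$. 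No genuine obstacle arises once the edge decomposition and disjoint palettes are in place; the whole argument reduces to transporting the two defining properties of the factor colorings along this partition.
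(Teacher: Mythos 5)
Your proposal is correct and follows exactly the paper's approach: decompose $E(G\square H)$ into the copies of $G$ and the copies of $H$, color them factor-wise with disjoint palettes $[p]$ and $\{p+1,\ldots,p+q\}$, and observe that the simultaneous-coloring properties transfer. The only difference is that you spell out the verification (properness, edge-color distinctness across the $\mu$ colorings, and the $k$-independence of the vertex color sets $A_u\cup(p+B_v)$) which the paper dismisses as obvious.
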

\begin{proof}{
Suppose that $G$ and $H$ be two $\mu$--simultaneous edge colorable graphs.
It is sufficient to consider for each copy of $G$ in $G\square H$ a $\mu$--simultaneous edge coloring
with color set $\{1,\ldots,\chi'_{\mu-SE}(G)\}$ and for each copy of $H$ in $G\square H$ a $\mu$--simultaneous edge coloring
with color set $\{\chi'_{\mu-SE}(G)+1,\chi'_{\mu-SE}(G)+2,\ldots,\chi'_{\mu-SE}(G)+\chi'_{\mu-SE}(H)\}$. Obviously, these colorings form a
 $\mu$--simultaneous edge coloring of $G\square H$.
}\end{proof}

The {\sf lexicographic product} of two simple graphs $G$ and $H$ is the simple graph $G[H]$ whose
vertex set is $V(G)\times V(H)$, and two vertices $(u,v)$ and $(u',v')$ are adjacent if and only if $uu'\in E(G)$,
or $u=u'$ and $vv'\in E(H)$.
\begin{thm}\label{thm:G[H]}
 If  $H$ is $\mu$--simultaneous edge colorable, then for every simple graph $G$, $G[H]$ is also $\mu$--simultaneous edge colorable.
\end{thm}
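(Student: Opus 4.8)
The plan is to decompose the edge set of $G[H]$ into pieces that are individually $\mu$--simultaneous edge colorable and then to color the pieces on pairwise disjoint palettes, exactly in the spirit of the proof of Theorem~\ref{join} and of the Cartesian--product results above. Write $n=|V(H)|$ and recall that, since $H$ is $\mu$--simultaneous edge colorable, the Observation gives $\mu\le\delta(H)\le n-1$, so in particular $n\ge 2$ and $\mu\le n$. The edges of $G[H]$ split naturally into the \emph{internal} edges, lying inside the $|V(G)|$ vertex--disjoint copies $H_u:=\{(u,v):v\in V(H)\}$ of $H$, and the \emph{cross} edges, joining $H_u$ to $H_{u'}$ whenever $uu'\in E(G)$. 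For a fixed edge $uu'\in E(G)$ the cross edges between $H_u$ and $H_{u'}$ form a complete bipartite graph $K_{n,n}$.

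First I would color the cross edges. Fix a proper edge coloring of the simple graph $G$ with color classes (matchings) $M_1,\dots,M_{\chi'(G)}$. For each $i$, the cross edges sitting over the edges of $M_i$ form a vertex--disjoint union of copies of $K_{n,n}$ (disjoint because $M_i$ is a matching), which I will call layer $i$. By Corollary~\ref{K_r,s} each such $K_{n,n}$ admits a $\mu$--simultaneous edge coloring on $n$ colors (here $\mu\le n$ is used), and a disjoint union of $\mu$--simultaneous edge colorable graphs is clearly again $\mu$--simultaneous edge colorable; so I color all the $K_{n,n}$'s of layer $i$ on a single common palette $P_i$ of $n$ fresh colors, taking $P_1,\dots,P_{\chi'(G)}$ pairwise disjoint. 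Finally I color the internal copies: fix a $\mu$--simultaneous edge coloring of $H$ and use it on each copy $H_u$, on one further palette $P_0$ of size $\chi'_{\mu-SE}(H)$ disjoint from all the $P_i$.

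It remains to check that the superposition of these $\mu$ colorings is a $\mu$--simultaneous edge coloring of $G[H]$. Each edge lies in exactly one piece (an internal copy or a single cross layer), so properness of each of the $\mu$ colorings and the no--repeat condition both follow piecewise, the disjointness of the palettes ruling out any clash between different pieces. For the per--vertex condition, note that every edge incident to $(u,v)$ lies either in $H_u$ or in some layer $i$ with $u$ matched in $M_i$; the set of colors contributed at $(u,v)$ by each such piece is, by that piece's simultaneous coloring, the same in all $\mu$ colorings (in fact for a $K_{n,n}$--layer it is all of $P_i$, since $K_{n,n}$ is $n$--regular and uses $n$ colors), and these contributions live on disjoint palettes, so their union is the same in all $\mu$ colorings. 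The only real point to watch is precisely this interaction between layers: a single vertex $(u,v)$ meets $\deg_G(u)$ different $K_{n,n}$'s, so one cannot color the cross parts independently on a common palette, and the proper edge coloring of $G$ together with disjoint per--layer palettes is exactly what disentangles them. As a byproduct one obtains $\chi'_{\mu-SE}(G[H])\le \chi'_{\mu-SE}(H)+\chi'(G)\,|V(H)|$.
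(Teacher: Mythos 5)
Your proof is correct and follows essentially the same route as the paper's: both decompose $G[H]$ into the internal copies of $H$, colored by a fixed $\mu$--simultaneous edge coloring of $H$ on one palette, and the cross $K_{n,n}$'s, grouped by the color classes of a proper edge coloring of $G$ and colored via Corollary~\ref{K_r,s} on pairwise disjoint fresh palettes (your ``layers'' are exactly the paper's palette blocks indexed by $c_G(u_iu_j)$). The only difference is that you spell out the verification the paper dismisses as ``easy to check'' (including why $\mu\le n$ holds and why per-vertex color sets agree), and you record the explicit bound $\chi'_{\mu-SE}(G[H])\le \chi'_{\mu-SE}(H)+\chi'(G)\,|V(H)|$, which is implicit in the paper's construction.
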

\begin{proof}{
Let $G$ and $H$ be two simple graphs, $V(G)=\{u_1,\ldots,u_m\}$, and
$V(H)=\{v_1,\ldots,v_n\}$. The graph $G[H]$ consists of
copies $H^1,\ldots,H^m$ of $H$, in which the edge between  $H^i$ and $H^j$ are isomorph  to
 a copy of $K_{n,n}$, whenever $u_iu_j\in E(G)$. Let $J_{ij}$ denote the copy of $K_{n,n}$ corresponds to the edges between
$H^i$ and $H^j$ and $c_G:E(G)\rightarrow \{1,\ldots,\chi'(G)\}$ be a proper edge coloring of $G$.
Now define  $\mu$ edge colorings of $G[H]$. For every $H^i,\ 1\le i\le m$, define
a $\mu$--simultaneous edge coloring the same as $\mu$--simultaneous edge coloring of $H$ by color set $\{1,\ldots,\chi'_{\mu-SE}(H)\}$.
Since by Corollary~\ref{K_r,s}, every $K_{n,n}$ has a $\mu$--simultaneous edge coloring, for every $J_{ij}$
define a $\mu$--simultaneous edge coloring by color set
$\{\chi'_{\mu-SE}(H)+(c_G(u_iu_j)-1)n,\chi'_{\mu-SE}(H)+(c_G(u_iu_j)-1)n+1,\ldots,\chi'_{\mu-SE}(H)+(c_G(u_iu_j)-1)n+(n-1)\}$.
It is easy to check these colorings form a $\mu$--simultaneous edge coloring of $G[H]$.
}\end{proof}
\section{\large  $2$--Simultaneous edge coloring }
In this section we concern on the $2$--Simultaneous edge coloring. First, we study the properties of the extermal counterexample to the strong SE conjecture. Then, we consider the $2$--Simultaneous edge coloring for graphs in general.\\
If the strong  SE conjecture is false, then it must have a minimal counterexample.
We consider the family of counterexamples to the strong SE conjecture with maximum number of vertices among ones with minimum number of edges.
\begin{thm}\label{thm:co.ex.2--SE}
Let $G$ be a bridgeless bipartite graph that is not $2$--simultaneous edge colorable with maximum number of vertices among ones with minimum number of edges, then
\item{\rm (i)} $G$ is $2$-connected;
%
%
\item {\rm (ii)} $\delta(G)=2$ and $\Delta(G)=3$; %
\item {\rm (iii)} $G$ has no nontrivial edge cut of size  $2$;
\item {\rm (iv)} for each $v\in V(G)$, which $\deg(v)=2$, $G - v$ is bridgeless;
\item {\rm (v)} for each $v\in V(G)$, if  $N(v)=\{u,w\}$, then $N(u)\cap N(w)=\{v\}$.
%
\end{thm}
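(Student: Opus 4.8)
The plan is a minimal-counterexample analysis built entirely on Lemma~\ref{SEOCDC}: a bridgeless bipartite graph is $2$-simultaneous edge colorable if and only if it has an OCDC. Thus $G$ has no OCDC, whereas every bridgeless bipartite graph with fewer edges, and every such graph with the same number of edges but strictly more vertices, does have one. Two moves recur: (a) pass to a smaller bridgeless bipartite graph $H$, obtain an OCDC of $H$ by minimality, and lift it to an OCDC of $G$; (b) enlarge the vertex set without changing the edge count and invoke maximality. Part~(i) is the first instance of (a): if $G$ had a cut vertex, each of its blocks would be a bridgeless bipartite graph with fewer edges, hence would have an OCDC, and since no circuit crosses a cut vertex, the union of these is an OCDC of $G$, a contradiction.

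For (ii), $\delta(G)\ge 2$ since $G$ is bridgeless. If some $v$ had $\deg(v)\ge 4$, I would use Fleischner's vertex-splitting lemma to detach a suitable pair of edges at $v$ onto a new degree-$2$ vertex $v'$ placed on $v$'s side, keeping the graph simple, bipartite, and bridgeless; the result $G'$ has the same edges and one more vertex, so by move (b) it has an OCDC, and identifying $v'$ with $v$ returns an OCDC of $G$. Hence $\Delta(G)\le 3$. If $\Delta(G)=2$ then $G$ is an even circuit, with the obvious OCDC; if $\delta(G)=3$ then $G$ is cubic bipartite, so K\"onig's theorem gives $\chi'(G)=3=\Delta(G)$, i.e.\ $G$ is $1$-factorable and $2$-simultaneous edge colorable by Lemma~\ref{r-regular}. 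Both are impossible, so $\delta(G)=2$ and $\Delta(G)=3$.

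Parts (iii) and (iv) are again instances of move (a). For (iii), suppose $F=\{e_1,e_2\}$ is a nontrivial $2$-edge cut with sides $G_1,G_2$ and $e_i=a_ib_i$, $a_i\in G_1$. I would reconnect each side: if $a_1,a_2$ lie in the same part of the bipartition, add a new degree-$2$ vertex joined to both (and symmetrically for $b_1,b_2$); otherwise add the single edge $a_1a_2$ (resp.\ $b_1b_2$). Either connector keeps the graph bipartite, bridgeless, and with fewer edges, so the two graphs $H_1,H_2$ have OCDCs; since each connector is doubly covered with both orientations, splicing the $H_1$-cycles through $G_2$ along $e_1,e_2$ (and vice versa) yields an OCDC of $G$, a contradiction. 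For (iv), let $\deg(v)=2$ with $N(v)=\{u,w\}$ and suppose $G-v$ has a bridge $e=xy$ with sides $A\ni x$, $B\ni y$; as $G$ is bridgeless some cycle through $e$ uses $v$, so $u\in A$ and $w\in B$ after relabeling, whence $\{e,vu\}$ and $\{e,vw\}$ are $2$-edge cuts of $G$ and at least one is nontrivial (using $|V(G)|\ge 4$), contradicting (iii).

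Finally (v): let $\deg(v)=2$, $N(v)=\{u,w\}$, and suppose $z\in N(u)\cap N(w)\setminus\{v\}$, giving a $4$-cycle on $u,v,w,z$. By (iv), $G-v$ is bridgeless bipartite with fewer edges, so it has an OCDC $\mathcal{O}'$ in which $uz$ and $wz$ are each covered once in either direction. I would reroute the cover $u\to z$ along $u\to v\to w\to z$ and the cover $w\to z$ along $w\to v\to u\to z$; the net effect is to place two oppositely oriented covers on each of $uv,vw$ and to leave $uz,wz$ (and all other edges) doubly covered in opposite directions, and if the two rerouted covers lie on one cycle the resulting even subgraph is split into oriented circuits at $v$. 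This is an OCDC of $G$, a contradiction. The routine parts are the block decomposition of (i) and the degree bookkeeping of (ii); the main obstacle is the gluing in (iii), where the two bipartite types of a $2$-edge cut force different connectors and the recombined cycles must be checked to carry consistent orientations, with the rerouting identity of (v) a second delicate point, since one must confirm that the four affected edge-covers balance and that the possibly-merged cycle decomposes into legitimately oriented circuits.
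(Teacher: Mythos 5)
Your parts (i)--(iv) follow the paper's own proof essentially verbatim: (i) takes an OCDC of each block; (ii) uses Fleischner's vertex-splitting lemma together with the vertex-maximality move to get $\Delta(G)\le 3$, then rules out $2$-- and $3$--regularity exactly as the paper does; (iii) uses the same two connectors depending on the bipartition classes of the cut ends (a single edge in one case, a new degree-$2$ vertex in the other) and splices the two OCDCs across the cut; (iv) reduces a bridge of $G-v$ to a nontrivial $2$--edge cut of $G$, as in the paper. These are all sound at the paper's level of rigor. The gap is in (v), where you diverge from the paper.

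Write $z$ for the common neighbor of $u$ and $w$. In the OCDC $\mathcal{O}'$ of $G-v$, each directed circuit passes through $z$ at most once, entering on one edge and leaving on a \emph{different} edge. Hence the two covers $u\to z$ and $w\to z$, both entering $z$, can never lie on one circuit, so your only safeguard (``if the two rerouted covers lie on one cycle\dots split at $v$'') addresses a configuration that cannot occur. The actual problem is unavoidable: since $\deg_{G-v}(z)=\deg_G(z)\le 3$ by (ii), the pairing of in-covers with out-covers at $z$ (no passage may enter and leave on the same edge) forces some circuit of $\mathcal{O}'$ to contain the directed path $u\to z\to w$ or the directed path $w\to z\to u$ --- indeed, if $(u\to z,\,z\to w)$ is not a passage then $u\to z$ must pair with $z\to x$ (third neighbor), whence $w\to z$ must pair with $z\to u$. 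Say $C\supseteq u\to z\to w$. Your reroute replaces $u\to z$ in $C$ by $u\to v\to w\to z$, producing a closed walk containing $\cdots u\to v\to w\to z\to w\cdots$, which traverses the edge $wz$ in both directions consecutively; at $z$ this walk carries only the in-cover $w\to z$ and the out-cover $z\to w$, so \emph{every} decomposition of it into closed sub-walks produces the degenerate two-edge object $w\to z\to w$, which is not a circuit of the simple graph $G$. (Your second reroute has the symmetric failure when $(w\to z,\,z\to u)$ is a passage.) So as written the construction never yields an OCDC; the edge counts balance, but the structural decomposition into genuine circuits fails. It can be repaired by re-splicing with the circuits carrying $z\to u$ and $z\to w$ (one can extract the directed $4$--circuit $u\to v\to w\to z\to u$, say), but the clean fix is the paper's: take the circuit $C$ containing $u\to z\to w$, replace its path $u,z,w$ by $u,v,w$, and add the single directed $4$--circuit $v\to u\to z\to w\to v$; then each of $uv$, $vw$, $uz$, $zw$ is covered twice in opposite directions and all other circuits are untouched.
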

\begin{proof}{
Let $V(G)=X\cup Y$. By Theorem~\ref{SEOCDC}, $G$ is a bridgeless bipartite
graph with no $\rm OCDC$ while every bridgeless bipartite graph $G'$ with $|E(G')|< |E(G)|$
or $|E(G')|= |E(G)|$ and $|V(G')|>|V(G)|$ has an $\rm OCDC$.
\item{\rm (i)} Let $v\in V(G)$ be a cut vertex of $G$. By the minimality of $G$, every block $B$
of $G$ has an $\rm OCDC,\ \mathcal{C}_B$. Therefore,
$$\mathcal{C}=\bigcup_{B\mbox{ is a block of } G}\mathcal{C}_B$$
is an $\rm OCDC$ of $G$, which is a contradiction.
\item{\rm (ii)} Let $v\in V(G)$ be a vertex of degree greater than $3$.
By H. Fleischner's vertex-splitting lemma~\cite{Flieschner},
there exist two edges $e_1=uv$ and $e_2=wv\in E(G)$ such that $G\cup\{uw\}\setminus \{e_1,e_2\}$
 is bridgeless. Let $G'$ be the new graph obtained by subdividing the edge $uw$ in vertex $v'$.
  Thus, $G'$ is bridgeless bipartite graph
such that $|V(G')|=|V(G)|+1$ and $|E(G')|=|E(G)|$. Therefore, $G'$ has an $\rm OCDC,\ \mathcal{C}'$. Let $C_1'$ and $C_2'$ be two directed circuits in $\mathcal{C}'$ that
 include the directed paths $uv'w$ and $wv'u$, respectively.
Define
$C_1=C_1'\cup\{uv,vw\}\setminus \{uv',v'w\}$ and $C_2=C_2'\cup\{wv,vu\}\setminus \{wv',v'u\}$. Then,
$$\mathcal{C}=\mathcal{C}'\cup \{C_1,C_2\}\setminus \{C_1',C_2'\},$$
is an $\rm OCDC$ of $G$, which is a contradiction.

If $\delta(G)\ne 2$, since $G$ is a bridgeless graph and $\Delta(G)\le 3$, $G$ is  $3$-regular. 
Therefore, $G$ is $1$-factorable. Thus by Theorem~\ref{r-regular}, $G$ is $2$--simultaneous colorable, which is a contradiction.
Hence, $\delta(G)=2$ and  by the same reason $\Delta(G)=3$.
\item{\rm (iii)} Let $F=\{e_1=ab,e_2=cd\}$ be a disjoint vertex edge cut of $G$
and $G_1$ and $G_2$ be two nontrivial components of $G\setminus F$ such that $a,c\in V(G_1)$.
Note that the case $a=c$ or $b=d$  does not occur because if so then we get a bridge in $G$.
We consider two following cases.
\\
{$\bullet$} \underline{ $a,d\in X$ and  $b,c\in Y$.}
Let $G_1'=G_1\cup\{ac\}$ and $G_2'=G_2\cup\{bd\}$.
By the edge minimality of $G$, $G_1'$ and $G_2'$ have $\rm OCDC$s, $\mathcal{C}_1$ and $\mathcal{C}_2$,
respectively. Let ${C}_1^1$ and ${C}_1^2$ be two directed circuits in $\mathcal{C}_1$ that
 include the directed edge $ac$ and $ca$, respectively. Assume that
 ${C}_2^1$ and ${C}_2^2$ be two directed circuits in $\mathcal{C}_2$ that
 include the directed edges $db$ and $bd$, respectively.
 Define
$C_1=C_1^1\cup C_2^1\cup\{ab,dc\}\setminus \{ac,db\}$ and
$C_2=C_1^2\cup C_2^2\cup\{ba,cd\}\setminus \{ca,bd\}$, where $uv$ means a directed edge from $u$ to $v$. Thus,
$$\mathcal{C}=\mathcal{C}_1\cup\mathcal{C}_2\cup \{C_1,C_2\}\setminus \{C_1^1,C_1^2,C_2^1,C_2^2\},$$
is an $\rm OCDC$ of $G$, which is a contradiction.
\\
{$\bullet$} \underline{ $a,c\in X$ and  $b,d\in Y$.}
Let $G_1'$ be the graph obtained from $G_1$ by joining a new vertex $v_1$ to $a$ and $c$,
and  $G_2'$ be the graph obtained from $G_2$ by joining a new vertex $v_2$ to $b$ and $d$.
By the edge minimality of $G$, bipartite graphs $G_1'$ and $G_2'$ have $\rm OCDC$s, $\mathcal{C}_1$ and $\mathcal{C}_2$,
respectively. Let ${C}_1^1$ and ${C}_1^2$ be two directed circuits in $\mathcal{C}_1$ that
 include the directed paths $av_1c$ and $cv_1a$, respectively. Assume that
 ${C}_2^1$ and ${C}_2^2$ be two directed circuits in $\mathcal{C}_2$ that
 include the directed paths $dv_2b$ and $bv_2d$, respectively.
 Define
$C_1=C_1^1\cup C_2^1\cup\{ab,dc\}\setminus \{av_1,v_1c,dv_2,v_2b\}$ and
$C_2=C_1^2\cup C_2^2\cup\{ba,cd\}\setminus \{v_1a,cv_1,v_2d,bv_2\}$, where $uv$ means a directed edge from $u$ to $v$. Thus,
$$\mathcal{C}=\mathcal{C}_1\cup\mathcal{C}_2\cup \{C_1,C_2\}\setminus \{C_1^1,C_1^2,C_2^1,C_2^2\},$$
is an $\rm OCDC$ of $G$, which is a contradiction.

\item{\rm (iv)} If $\deg(v)=2$, then every bridge in $G-v$    with one of the edges incident on $v$ forms an nontrivial edge cut of size $2$, which is a contradiction.
\item{\rm (v)} Suppose that  $N(v)=\{u,w\}$ and $v'\in (N(u)\cap N(w))\setminus \{v\}$.
By {\rm (iv)} and the minimality of $G$, $G - v $ has an $\rm OCDC$, $\mathcal{C}'$. Since $\deg_G(v')\le 3$, without loss of generality,
there exists a directed circuit $C\in \mathcal{C}'$ that include the directed edges $uv'$ and $v'w$.
Let
$C_1=C\cup\{uv,vw\}\setminus \{v'\}$ and $C_2=vuv'wv$. Then,
$$\mathcal{C}=\mathcal{C}'\cup \{C_1,C_2\}\setminus \{C\},$$
is an $\rm OCDC$ of $G$, which is a contradiction.
}\end{proof}
In the rest of this section, we consider the $2$--simultaneous edge coloring for graphs in general.
For example the following two colorings is a $2$--simultaneous edge coloring for wheel $W_n$, $n\ge 3$.
Assume that $V(W_n)=\{u,v_1,\ldots,v_n\}$ and
$E(W_n)=\{uv_i,v_iv_{i+1} (\mbox{mod } n) : 1\le i\le n\}$.
Define two edge coloring $f_j:E(W_n) \rightarrow [n]$, $j=1,2$,
$f_1(uv_i)=i$, $f_1(v_iv_{i+1})=i+2$, and $f_2(uv_i)=i+2$, $f_2(v_iv_{i+1})=i+1$, where the colors and subscripts are reduced modulo $n$.
It is easy to check that $(f_1,f_2)$ forms a $2$--simultaneous edge coloring of $W_n$.
\begin{thm}\label{2k--subdivide}
Let $G$ be a $2$--simultaneous edge colorable graph. If $G'$ is a graph obtained from $G$ by replacing
an edge $xy\in E(G)$ with simple path $xv_1v_2\ldots v_{2k}y$ such that $v_i\notin V(G),\ 1\le i\le2k$,
then $G'$ is also $2$--simultaneous edge colorable.
\end{thm}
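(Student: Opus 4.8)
The plan is to extend a given $2$--simultaneous edge coloring $(c_1,c_2)$ of $G$ across the new path, using only the two colors that already appear on $xy$. Write $a=c_1(xy)$ and $b=c_2(xy)$; since no edge receives the same color in both colorings, $a\neq b$. I would keep $c_1$ and $c_2$ unchanged on every edge of $G$ other than $xy$, and color the $2k+1$ edges of the path $xv_1v_2\cdots v_{2k}y$ alternately with the ordered pair $(a,b)$ and its reverse $(b,a)$: the edge $xv_1$ and every second edge thereafter receive $c_1=a,\ c_2=b$, while the intervening edges receive $c_1=b,\ c_2=a$. Equivalently, numbering the path edges $1,\dots,2k+1$ starting at $xv_1$, odd-numbered edges get the pattern $(a,b)$ and even-numbered edges get $(b,a)$.

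That this alternating pattern is the right (indeed the only consistent) choice follows from the degree-two constraint at each internal vertex. If an internal vertex $v_j$ has one incident edge colored $(\alpha,\beta)$ with $\alpha\neq\beta$, then the two conditions in Definition~\ref{def:k--SE} force the colors on the other incident edge $e'$. Properness gives that $e'$ avoids $\alpha$ in $c_1$ and $\beta$ in $c_2$; the common-color-set condition $\{\alpha,c_1(e')\}=\{\beta,c_2(e')\}$ together with $\alpha\neq\beta$ then forces $c_1(e')=\beta$ and $c_2(e')=\alpha$. Thus crossing each internal vertex swaps the ordered pair, and starting from $(a,b)$ on $xv_1$ the pattern alternates exactly as described.

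The one place where consistency could fail is the far endpoint $y$, and this is where the parity of the subdivision is decisive. After passing through the $2k$ internal vertices $v_1,\dots,v_{2k}$, the ordered pair has been swapped an even number of times, so the final edge $v_{2k}y$ again carries $(a,b)$ --- precisely the pair that the removed edge $xy$ contributed at $y$. Hence the color set at $y$ is unchanged in each coloring, and likewise at $x$, since $xv_1$ reproduces the colors of $xy$ there. It then remains only to check the three defining conditions: each $c_i$ is proper (adjacent path edges alternate between $a$ and $b$, and nothing changes at $x$ or $y$); the color set at every internal vertex is $\{a,b\}$ in both colorings while the sets at $x,y$ and all old vertices are untouched; and every path edge carries $\{a,b\}$ in opposite orders, so $c_1\neq c_2$ on it. I expect no genuine obstacle beyond recording this parity bookkeeping: the even number $2k$ of inserted vertices is exactly what makes the forced alternation close up correctly at $y$, and an odd number would instead return the reversed pair $(b,a)$ there and break the color-set condition.
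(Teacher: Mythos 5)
Your proof is correct and is essentially the paper's own argument: the paper likewise keeps $(f_1,f_2)$ unchanged on $E(G)\setminus\{xy\}$ and colors the path edges alternately with the ordered color pair of $xy$ and its reverse (taking $f_j(xy)=j$ without loss of generality), with the even number of inserted vertices ensuring the pattern closes up correctly at $y$. Your extra observation that the alternation is forced at the degree-two internal vertices is a pleasant remark beyond what the paper records, but the construction and verification are the same.
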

\begin{proof}{
Let $(f_1,f_2)$ be a $2$--simultaneous edge coloring of $G$. Without loss of generality, suppose that
$f_j(xy)=j,\ j=1,2$. Define two proper edge colorings $f'_1$ and $f'_2$ of $G'$ as follows.
$f'_j(xv_1)=f'_j(v_{2i}v_{2i+1})=f'_j(v_{2k}y)=j$, $f'_j(v_{2i-1}v_{2i})=j+1\ (\rm mod\ 2)$, and
$f'_j(e)=f_j(e)$ for $e\in E(G)\setminus \{xy\},\ 1\le i\le k-1,$ and $j=1,2$.
Therefore, $(f'_1,f'_2)$ is  a $2$--simultaneous edge coloring of $G'$.
}\end{proof}
\begin{thm}
Let $G$ be a bridgeless graph with girth at least $2k-1$, $k\ge 2$. If $G$ is $2$--simultaneous edge colorable, then $|E(G)|\ge k\chi'(G)$.
\end{thm}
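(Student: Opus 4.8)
The plan is to extract from a $2$--simultaneous edge coloring a family of \emph{even} circuits that doubly covers $E(G)$, and then to combine the girth bound with a parity argument and a double count. Fix a $2$--simultaneous edge coloring $(c_1,c_2)$ of $G$ with color set $[l]$. For each color $i\in[l]$ put $M_i^1=c_1^{-1}(i)$, $M_i^2=c_2^{-1}(i)$, and $E_i=M_i^1\cup M_i^2$. Since $c_1,c_2$ are proper, each $M_i^1$ and $M_i^2$ is a matching, so $E_i$ has maximum degree at most $2$; and since $c_1(e)\ne c_2(e)$ for every edge $e$, the matchings $M_i^1$ and $M_i^2$ are disjoint. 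I would then read off $|E(G)|$ from how these sets interlock.

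First I would pin down the structure of a single $E_i$. The vertex condition in Definition~\ref{def:k--SE} says that for every vertex $v$ and color $i$, color $i$ appears at $v$ under $c_1$ exactly when it appears at $v$ under $c_2$; equivalently $\deg_{M_i^1}(v)=\deg_{M_i^2}(v)\in\{0,1\}$, so $\deg_{E_i}(v)\in\{0,2\}$. Hence each nontrivial component of $E_i$ is a circuit, and the two edges at each vertex come one from $M_i^1$ and one from $M_i^2$ (they are distinct, as $M_i^1\cap M_i^2=\emptyset$). Walking around such a circuit the edges therefore alternate between $M_i^1$ and $M_i^2$, which forces the circuit to have \emph{even} length. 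This is the only genuinely delicate point, and it is precisely where both defining properties of a $2$--simultaneous edge coloring are used at once: the vertex condition makes every degree in $E_i$ equal to $0$ or $2$, while $c_1(e)\ne c_2(e)$ makes the two matchings disjoint so that the circuits really do alternate.

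With the structure in hand the girth enters cleanly. Each circuit of $E_i$ is a cycle of $G$, so it has length at least the girth, which is at least $2k-1$; being even, its length is at least $2k$. Consequently every nonempty $E_i$ has at least $2k$ edges. It remains to count. Because $\{M_i^1\}_{i}$ and $\{M_i^2\}_{i}$ are each partitions of $E(G)$, we have $\sum_{i}|E_i|=\sum_i|M_i^1|+\sum_i|M_i^2|=2|E(G)|$. On the other hand, every color used by $c_1$ produces a nonempty $E_i$, and $c_1$, being a proper edge coloring, uses at least $\chi'(G)$ colors; hence at least $\chi'(G)$ of the sets $E_i$ are nonempty, each contributing at least $2k$. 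Combining, $2|E(G)|=\sum_i|E_i|\ge 2k\,\chi'(G)$, which gives $|E(G)|\ge k\,\chi'(G)$.

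In summary, I expect the whole difficulty to sit in the structural claim that each $E_i$ is a disjoint union of \emph{even} circuits; upgrading the girth bound from $2k-1$ to $2k$ by parity and finishing by the double count are routine once that claim is established. (As a sanity check, the bridgeless hypothesis is automatic in this setting, since the circuits of the $E_i$ already constitute a cycle double cover of $G$, cf.\ Lemma~\ref{SEOCDC}.)
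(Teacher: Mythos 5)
Your proof is correct and takes essentially the same route as the paper: both arguments rest on the structural fact that each union $f_1^j\cup f_2^j$ of corresponding color classes is a disjoint union of even circuits, whose length the girth hypothesis (together with parity) forces to be at least $2k$. The only cosmetic difference is that you finish with a direct double count, $2|E(G)|=\sum_i |E_i|\ge 2k\,\chi'(G)$, whereas the paper argues by contradiction, using pigeonhole to find a color class of size at most $k-1$ and hence an even circuit of length at most $2k-2$.
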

\begin{proof}{
Let $(f_1,f_2)$ be a $2$--simultaneous edge coloring of $G$ and $f_i^j=\{e\in E(G) : f_i(e)=j\}$, $i=1,2$.
Since $\chi'(G)\le \chi'_{2-SE}(G)$, if $|E(G)|< k\chi'(G)$,  then for some $j,\ 1\le j\le \chi'_{2-SE}(G)$, $|f_i^j|\le k-1$ for $i=1,2$.
Therefore, the induced subgraph by $f_1^j\cup f_2^j$ is a union of even circuits of length  at most  $2k-2$,   which is a contradiction.
}\end{proof}
%
%

In the following, we provide a relation between $2$--simultaneous edge coloring of a graph with a $\rm CDC$ with certain properties. Then, using this relation, we show some $2$--simultaneous edge colorable graph and also some graph which has no $2$--simultaneous edge coloring.
\begin{thm}\label{SE-CDC}
A bridgeless graph $G$ is a $2$--simultaneous edge colorable if and only if $G$ has a $\rm CDC,\ \mathcal{C},$ that satisfies in the following properties.
\item{\rm (i)} Every circuit of $\mathcal{C}$ is an even circuit.
\item{\rm (ii)} $\mathcal{C}$ has a partition to at least $\chi'(G)$ classes,
such that every class is $2$-regular.
\item{\rm (iii)} Every circuit in $\mathcal{C}$ has a proper $2$-edge coloring, such that each edge $e\in E(G)$ in different circuits
admits two different colors.
\end{thm}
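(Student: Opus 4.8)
The plan is to prove both implications by fixing a precise dictionary between the two colors of a simultaneous coloring and the two circuits of the double cover that contain each edge. Throughout I write a $2$--simultaneous edge coloring as a pair $(c_1,c_2)$ with common color set $[l]$, noting that $l\ge\chi'(G)$ automatically since each $c_i$ is proper.

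For the forward implication I would, given $(c_1,c_2)$, set $H_j$ to be the subgraph spanned by the edges receiving color $j$ in $c_1$ or in $c_2$, for each $j\in[l]$. The second axiom of Definition~\ref{def:k--SE} (no edge repeats a color) makes the $c_1$-color-$j$ edges and the $c_2$-color-$j$ edges disjoint, while the first axiom (equal color sets at each vertex) ensures color $j$ is present at a vertex under $c_1$ exactly when it is present under $c_2$. Hence each $H_j$ is $2$-regular on its support, a disjoint union of circuits; since the two $H_j$-edges at any vertex are one of each type, consecutive edges alternate between the $c_1$-type and the $c_2$-type, so every circuit is even, giving~(i). Then $\mathcal{C}=\bigcup_j H_j$ is a CDC, because an edge $e$ lies in precisely $H_{c_1(e)}$ and $H_{c_2(e)}$, which are distinct as $c_1(e)\ne c_2(e)$; the classes $H_1,\dots,H_l$ supply the $\ge\chi'(G)$ $2$-regular classes of~(ii). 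For~(iii) I would $2$-edge-color each circuit by edge-type, assigning color $1$ to the $c_1$-edges and color $2$ to the $c_2$-edges; this is proper on an even alternating circuit, and $e$ is type $1$ in $H_{c_1(e)}$ but type $2$ in $H_{c_2(e)}$, so it receives different colors in its two circuits.

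For the reverse implication, given a CDC $\mathcal{C}$ with $2$-regular classes $K_1,\dots,K_l$ and the $2$-edge-colorings of~(iii), I would first observe that the circuits inside a single class are vertex-disjoint (the class is $2$-regular), hence edge-disjoint, so each edge meets each class at most once; being doubly covered, each edge lies in exactly two \emph{distinct} classes, carrying type $1$ in one and type $2$ in the other by~(iii). I then define $c_1(e)$ (resp.\ $c_2(e)$) to be the index of the class in which $e$ has type $1$ (resp.\ type $2$), so that $c_1(e)\ne c_2(e)$, yielding the second SE axiom. Properness of $c_1$ holds because two edges at a vertex $v$ with equal $c_1$-value $j$ would both be type-$1$ edges of $K_j$ at $v$, yet the two $K_j$-edges at $v$ are adjacent on a circuit and so get different colors in its proper $2$-edge coloring, a contradiction (symmetrically for $c_2$). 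Finally, for each class $K_j$ meeting $v$ its two edges at $v$ carry types $1$ and $2$, so $j$ lies in the color set at $v$ under both $c_1$ and $c_2$; thus these color sets coincide, giving the first SE axiom.

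I expect the main obstacle to lie entirely in the bookkeeping of the reverse direction: verifying that the two circuits through each edge really sit in \emph{different} classes, which is exactly where $2$-regularity of the classes (not mere disjointness of circuits) is indispensable and which also rules out the degenerate possibility of an edge being covered twice by a single circuit of a simple graph, and then confirming that the single type-based definition of $(c_1,c_2)$ delivers properness and the equal-color-set condition at once. The remaining subtlety is to check that the count $\ge\chi'(G)$ in~(ii) is simultaneously consistent with, and forced by, the properness of the colorings produced.
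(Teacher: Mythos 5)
Your proposal is correct and takes essentially the same route as the paper's own proof: in the forward direction your classes $H_j$ are exactly the paper's $C_j=f_1^j\cup f_2^j$ with the same type-based $2$-edge coloring of each circuit, and in the reverse direction you define $(c_1,c_2)$ from the class index and the circuit coloring precisely as the paper defines $(f_1,f_2)$, using $2$-regularity of each class to get the equal-color-set condition at every vertex. Your added bookkeeping (alternation of types forcing even circuits, and the check that the two circuits through an edge lie in distinct classes) merely makes explicit what the paper asserts without comment.
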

\begin{proof}{
Suppose that $(f_1,f_2)$  is a $2$--simultaneous edge coloring of graph $G$.
Let $f_i^j=\{e\in E(G) : f_i(e)=j\}$ for $1\le j\le \chi'_{2-SE}(G)$ and $i=1,2$.
The induced subgraph by $C_j=f_1^j\cup f_2^j$ is a disjoint union of even circuits, for $1\le j\le \chi'_{2-SE}(G)$.
Therefore, $\mathcal{C}=\{C_j : 1\le j\le\chi'_{2-SE}(G)\}$ is a $\rm CDC$ of $G$ with properties {\rm (i)} and {\rm (ii)}.
Now for every edge $e$ in a circuit of $C_j$, let $c_j(e)=i$, where $f_i(e)=j,$ $i=1,2$, one can see that $c_j$ satisfies the property {\rm (iii)}.

 Conversely, let $\mathcal{C}$ be a $\rm CDC$, where $C_1,C_2,\ldots,C_t$ is a partition of $\mathcal{C}$ such that $C_i$, $1\le i\le t$, is $2$-regular
and $c_i$ is a proper edge coloring of $C_i$ satisfies in condition {\rm (iii)}.
Now we define two edge colorings $(f_1,f_2)$ as follows. For every edge $e$, if
$e\in C_j$ and $c_j(e)=i$, then set $f_i(e)=j$. By the assumption, it is clear that $f_i$, $i=1,2$, is a proper edge colorings and
$f_1(e)\ne f_2(e)$ for every $e\in E(G)$. It is enough to show that the set of colors appear on the edges incident to each vertex are the same.
Let $v$ be an arbitrary vertex of $G$ and $u\in V(G)$ be an arbitrary neighbor of $v$.
Without loss of generality, suppose that $f_1(uv)=j$, $1\le j\le t$,
$uv\in C_j$ and $c_j(uv)=1$. Since  $C_j$ is $2$-regular and $c_j$ is a proper $2$-edge coloring,
there exists an edge $vw\in C_j$ that $c_j(vw)=2$. Therefore, $f_2(vw)=j$. Thus, $(f_1,f_2)$ is
a $2$--simultaneous edge coloring of $G$.
}\end{proof}
If $G$ has an even circuit decomposition, then two copies of this decomposition  satisfies in three conditions of
Theorem~\ref{SE-CDC}. Hence, $G$ is $2$--simultaneous edge colorable. In other words, by Theorem~\ref{SE-CDC}, an even graph, $G$ is
$2$--simultaneous edge colorable if and only if $G$ has an even circuit decomposition.
\begin{thm}\label{Hamil.OCDC}
Let $C$ be an even Hamiltonian circuit of $G$ and $G\setminus E(C)$ be a bipartite graph. If
$G\setminus E(C)$ has an $\rm OCDC$, then $G$ has a $2$--simultaneous edge coloring.
\end{thm}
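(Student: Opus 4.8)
The plan is to split $E(G)$ into the two edge-disjoint pieces $E(C)$ and $E(H)$, where $H:=G\setminus E(C)$, produce a $2$--simultaneous edge coloring of each piece on a \emph{disjoint} palette of colors, and then paste the two colorings together. This disjoint-palette union is exactly the mechanism already used to prove Theorem~\ref{join} and the product theorems: both defining conditions of a $2$--simultaneous edge coloring are local at a vertex and are insensitive to adjoining, at that vertex, a block of edges carrying a fresh set of colors that is common to both colorings.

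First I would color $H$. Since $H$ is bipartite and, by hypothesis, has an $\mathrm{OCDC}$, Lemma~\ref{SEOCDC} supplies a $2$--simultaneous edge coloring $(g_1,g_2)$ of $H$; say it uses the palette $\{1,\dots,m\}$. Next I would color $C$. Being an even circuit, $C$ is $2$-regular and $1$-factorable, so by Lemma~\ref{r-regular} it is $2$--simultaneous edge colorable; concretely, writing $C=v_1v_2\cdots v_{2k}v_1$, let $h_1$ alternate the two new colors $m+1,m+2$ around $C$ and let $h_2$ be the pointwise swap of $h_1$. Evenness of $C$ is precisely what makes this alternation close up consistently around the cycle and makes every vertex of $C$ see the full set $\{m+1,m+2\}$ under both $h_1$ and $h_2$.

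Finally I would set $f_1:=g_1\cup h_1$ and $f_2:=g_2\cup h_2$ on $E(G)=E(H)\sqcup E(C)$ and verify the three defining properties. Properness of $f_1$ (and of $f_2$) at a vertex $v$ holds because the $H$-edges at $v$ are properly colored within $\{1,\dots,m\}$ while the two $C$-edges at $v$ are properly colored within $\{m+1,m+2\}$, and these palettes are disjoint; the condition $f_1(e)\ne f_2(e)$ holds edge-by-edge, since it already holds for $(g_1,g_2)$ on $E(H)$ and for $(h_1,h_2)$ on $E(C)$; and the color-set condition holds because at each $v$ the colors seen under $f_i$ are exactly the $g_i$-colors at $v$ together with $\{m+1,m+2\}$, whose $H$-part agrees for $i=1,2$ by the corresponding property of $(g_1,g_2)$ and whose $C$-part is the common set $\{m+1,m+2\}$.

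The argument is essentially bookkeeping, so there is no deep obstacle; the one point that genuinely must be respected is the evenness of $C$, which is needed both to close the alternating two-coloring around the circuit and to guarantee that $C$ is $2$--simultaneous edge colorable at all (an odd circuit is not). I would also record the alternative route through Theorem~\ref{SE-CDC}: here $G\supseteq C$ is bridgeless, and gluing the $\mathrm{OCDC}$ of $H$ to two oppositely oriented copies of $C$ produces a $\mathrm{CDC}$ of $G$ by even circuits for which conditions (i)--(iii) can be checked directly; but the palette-union construction above is shorter and avoids re-verifying the partition condition (ii).
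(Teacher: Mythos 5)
Your proof is correct, and it realizes the same underlying construction as the paper, but through a different mechanism. The paper also starts from Theorem~\ref{SEOCDC} to get a $2$--simultaneous edge coloring of $H=G\setminus E(C)$, but it then passes through Theorem~\ref{SE-CDC} in both directions: it converts that coloring into an even-circuit $\rm CDC$ $\mathcal{C}'$ of $H$ satisfying conditions {\rm(i)}--{\rm(iii)}, adjoins two copies of the Hamiltonian circuit to form $\mathcal{C}=\mathcal{C}'\cup\{C,C\}$, and invokes the converse direction of Theorem~\ref{SE-CDC} to recover a coloring of $G$. Your disjoint-palette union is exactly what this becomes under the dictionary of Theorem~\ref{SE-CDC}: the two copies of $C$ must form two separate partition classes carrying opposite alternating proper $2$-edge colorings, which is precisely your $h_1,h_2$ on the fresh colors $m+1,m+2$. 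What your route buys is self-containedness and explicitness: you bypass Theorem~\ref{SE-CDC} entirely and spell out the verification the paper compresses into ``it is easily seen'' --- in particular that Hamiltonicity of $C$ is what makes the color-set condition hold at \emph{every} vertex of $G$, including vertices with no $H$-edges, which see exactly $\{m+1,m+2\}$ under both colorings, and that evenness of $C$ is needed for the alternation to close up (an odd circuit is not $2$--simultaneous edge colorable, consistent with the fact that its only $\rm CDC$ consists of odd circuits). What the paper's route buys is uniformity with the viewpoint of its Section~3, where Theorem~\ref{SE-CDC} is the organizing tool and the $\mathcal{C}'\cup\{C,C\}$ picture sits naturally beside the subsequent corollary obtained via Theorem~\ref{thm:4-NZF}. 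Your closing remark correctly identifies this second route as the paper's, so nothing is missing; the argument as you give it is complete.
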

\begin{proof}{
By Theorem~\ref{SEOCDC}, $G\setminus E(C)$ is $2$--simultaneous edge colorable. Therefore  by Theorem~\ref{SE-CDC}, it has a $\rm CDC$, $\mathcal{C'}$, of even circuits
that has a partition to  even $2$-regular subgraphs and a proper $2$-edge coloring such that each edge of $G\setminus E(C)$ admits two different colors. Now let $\mathcal{C}=\mathcal{C'}\cup \{C,C\}$. It is easily seen that, $\mathcal{C}$ satisfies in three conditions of Theorem~\ref{SE-CDC}.
Thus,   $G$ is $2$--simultaneous edge colorable.
}\end{proof}
By Theorem~\ref{thm:4-NZF}~(i) and~(iii), we have the following corollary.
\begin{cor}
Let $C$ be an even Hamiltonian circuit of $G$ and $G\setminus E(C)$ be a bipartite graph.
If every edge of  $G\setminus E(C)$ is contained in a circuit of length  $4$ in $G\setminus E(C)$,
then $G$ is a $2$--simultaneous edge colorable graph.
\end{cor}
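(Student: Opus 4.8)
The plan is to reduce everything to Theorem~\ref{Hamil.OCDC}, which already asserts that if $C$ is an even Hamiltonian circuit of $G$, the complementary graph $G\setminus E(C)$ is bipartite, and $G\setminus E(C)$ possesses an $\rm OCDC$, then $G$ is $2$--simultaneous edge colorable. Two of these three hypotheses are handed to us directly in the statement, so the only object left to manufacture is an $\rm OCDC$ of the auxiliary graph $H:=G\setminus E(C)$.

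First I would reinterpret the girth-type hypothesis as a flow hypothesis. The assumption is that every edge of $H$ lies in a circuit of length $4$ inside $H$; since a circuit of length $4$ is in particular a circuit of length at most $4$, this is precisely the hypothesis of Theorem~\ref{thm:4-NZF}~(i). Applying that part immediately gives that $H$ admits a $4$-$\rm NZF$.

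Next I would convert the flow into a cover. By the equivalence in Theorem~\ref{thm:4-NZF}~(iii), a graph admits a $4$-$\rm NZF$ if and only if it has an $\rm OCDC$ (indeed one consisting of four directed cycles). Hence $H$ has an $\rm OCDC$. Finally, with $C$ an even Hamiltonian circuit, $H=G\setminus E(C)$ bipartite, and $H$ now equipped with an $\rm OCDC$, every hypothesis of Theorem~\ref{Hamil.OCDC} is satisfied, and that theorem delivers a $2$--simultaneous edge coloring of $G$.

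There is no genuinely hard step here: the argument is a short chaining of three previously established facts, and the only point requiring the slightest care is matching the stated ``circuit of length $4$'' to the ``length at most $4$'' phrasing of Theorem~\ref{thm:4-NZF}~(i), which is immediate. One could also remark in passing that the hypothesis forces $H$ to be bridgeless, so the resulting $\rm OCDC$ is not vacuous, but this is automatic once every edge sits on a $4$-circuit.
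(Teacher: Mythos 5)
Your proof is correct and takes exactly the same route as the paper, which derives this corollary precisely by applying Theorem~\ref{thm:4-NZF}~(i) to obtain a $4$-$\rm NZF$ of $G\setminus E(C)$, converting it into an $\rm OCDC$ via Theorem~\ref{thm:4-NZF}~(iii), and then invoking Theorem~\ref{Hamil.OCDC}. Your side remark matching ``circuit of length $4$'' to the ``length at most $4$'' hypothesis is fine and needs no further justification.
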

\begin{prop}\label{prop:K_5}
The complete graph $K_5$ has no $2$--simultaneous edge coloring.
\end{prop}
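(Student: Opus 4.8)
The plan is to exploit the fact that $K_5$ is an \emph{even} graph (it is $4$-regular, hence bridgeless with all degrees even) and to invoke the equivalence recorded immediately after Theorem~\ref{SE-CDC}: an even graph is $2$--simultaneous edge colorable if and only if it admits an even circuit decomposition. Thus it suffices to show that $K_5$ has no decomposition of its edge set into circuits each of even length.

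The arithmetic obstruction is then immediate. Every circuit (connected $2$-regular subgraph) of $K_5$ is a simple cycle on at most $5$ vertices, so its length is $3$, $4$ or $5$; the only even value is $4$. Hence any even circuit decomposition of $K_5$ would partition its $|E(K_5)|=\binom{5}{2}=10$ edges into edge-disjoint $4$-cycles, which is impossible since $4\nmid 10$. Therefore $K_5$ has no even circuit decomposition, and by the equivalence above it is not $2$--simultaneous edge colorable. I would present this as the main proof, since it is short and self-contained given the earlier results.

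If one wishes to argue directly from Theorem~\ref{SE-CDC} without the even-decomposition shortcut, I would proceed as follows. A hypothetical $2$--simultaneous edge coloring yields a $\rm CDC$ $\mathcal{C}$ of even circuits whose $2$-regular classes, inside $K_5$, must each be a single $4$-cycle; counting edge-incidences $2|E(K_5)|=20=4\cdot 5$ forces exactly five such $4$-cycles, one omitting each vertex (each vertex lies in $\deg=4$ of them). A short argument shows any two of these cycles meet in at least one edge, and since each of the $10$ edges lies in a unique pair of circuits while there are exactly $\binom{5}{2}=10$ pairs, each pair shares exactly one edge, giving a bijection between $E(K_5)$ and the pairs of circuits. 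Condition (iii) of Theorem~\ref{SE-CDC} then reduces to choosing, for each $4$-cycle $D_i$, one of its two proper $2$-colorings---encoded by $\varepsilon_i\in\{0,1\}$---so that $\varepsilon_i+\varepsilon_j\equiv b_{ij}\pmod 2$ for every pair, where $b_{ij}$ is determined by the colors forced on the shared edge.

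The main obstacle lies precisely in this last step: one must show that the resulting $\mathbb{Z}_2$ system is inconsistent for \emph{every} admissible configuration, i.e. that the constraints force all five $\varepsilon_i$ to be pairwise distinct (impossible over a two-element set), equivalently that the constraint sum around some triangle of indices is odd and the affine system has no solution. Carrying this out requires computing the parities $b_{ij}$ intrinsically and ruling out all configurations, a case analysis that the even-circuit-decomposition equivalence neatly sidesteps; this is the reason I would route the proof through evenness rather than through condition (iii) directly.
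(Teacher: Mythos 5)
Your main proof is unsound, because it uses the ``only if'' half of the remark following Theorem~\ref{SE-CDC}, and that half is neither proved in the paper nor true. The sentence preceding the remark only establishes the ``if'' direction (an even circuit decomposition, taken twice, satisfies the three conditions of Theorem~\ref{SE-CDC}); the converse is actually refuted by the paper's own results: by Proposition~\ref{prop:K_7}, the even graph $K_7$ is $2$--simultaneous edge colorable (take any two of the three colorings exhibited there), yet $K_7$ has no even circuit decomposition at all, since its $21$ edges cannot be partitioned into circuits of even length. So the remark's biconditional is a slip in the paper's exposition, and a proof of the proposition cannot rest on it. What a $2$--simultaneous edge coloring of $K_5$ actually yields, via Theorem~\ref{SE-CDC}, is a cycle \emph{double} cover by even circuits, and at that level your divisibility obstruction evaporates: the count is $2|E(K_5)|=20=5\cdot 4$, and $K_5$ genuinely possesses a CDC consisting of five $4$-circuits (Figure~\ref{K-5CDC}). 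Hence the observation that $10$ is not a multiple of $4$ rules out decompositions but not double covers, and your first argument proves nothing about $K_5$.

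Your fallback argument is, in substance, the paper's actual proof: since $\chi'(K_5)=5$ and $|E(K_5)|=10$, each $f_1^j\cup f_2^j$ must be a $4$-circuit, the resulting even-circuit CDC of $K_5$ is unique up to isomorphism, and the contradiction must come from condition {\rm (iii)} of Theorem~\ref{SE-CDC}. Your structural preliminaries are correct (the five $4$-circuits biject with the omitted vertices, each pair of circuits shares exactly one edge, and condition {\rm (iii)} becomes an affine system over $\Bbb{Z}_2$ in the five switch variables $\varepsilon_i$), but you stop exactly at the decisive step and state explicitly that you would not carry out the parity computation showing the system is inconsistent --- for instance by exhibiting a triple $i,j,k$ with $b_{ij}+b_{jk}+b_{ik}$ odd for the actual CDC. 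That finite verification is precisely what the paper's ``it is easy to check'' refers to, and it is the entire content of the proof once the unique CDC is identified. As submitted, then, the proposal does not establish the proposition: the route you present as primary relies on a false equivalence, and the route that would succeed is left unfinished at its only nontrivial point. The fix is to discard the even-decomposition shortcut and complete the $\Bbb{Z}_2$ case analysis on the CDC of Figure~\ref{K-5CDC}.
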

\begin{proof}{
Let $(f_1,f_2)$ be a $2$--simultaneous edge coloring of $K_5$ and $f_i^j=\{e\in E(G) : f_i(e)=j\}$, $i=1,2$ and $1\le j\le 5$. Since $\chi'(K_5)=5$ and $|E(K_5)|=10$, the induced subgraph by
 $f_1^j\cup f_2^j$ is a circuit of length $4$, for $1\le j\le 5$. By the isomorphic, there is exactly one {\rm CDC}
 of $K_5$ with even circuits, see Figure~\ref{K-5CDC}. It is easy to check that the condition {\rm (}iii{\rm)}
 of Theorem~\ref{SE-CDC}
 does not hold for this {\rm CDC}, which is a contradiction.
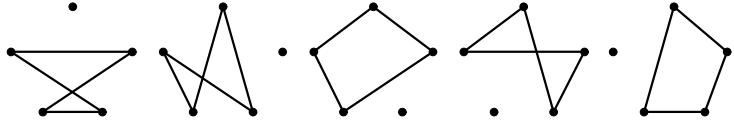
\begin{figure}[ht]
\centering \unitlength=1mm
\begin{picture}(109,26)
\drawthickdot{27.04}{15.00}
\drawthickdot{6.80}{15.00}
\drawthickdot{22.95}{15.00}
\drawthickdot{42.93}{15.00}
\drawthickdot{47.04}{15.00}
\drawthickdot{62.91}{15.00}
\drawthickdot{67.01}{15.00}
\drawthickdot{83.04}{15.00}
\drawthickdot{86.87}{15.00}
\drawthickdot{102.0}{15.00}
\drawthickdot{15.01}{21.00}
\drawthickdot{35.00}{21.00}
\drawthickdot{54.97}{21.00}
\drawthickdot{74.97}{21.00}
\drawthickdot{94.94}{21.00}
\drawthickdot{31.02}{7.00}
\drawthickdot{38.97}{7.00}
\drawthickdot{51.00}{7.00}
\drawthickdot{58.81}{7.00}
\drawthickdot{11.03}{7.00}
\drawthickdot{18.97}{7.00}
\drawthickdot{71.00}{7.00}
\drawthickdot{78.94}{7.00}
\drawthickdot{90.98}{7.00}
\drawthickdot{99.05}{7.00}
\thicklines
\drawpath{11.03}{7.00}{18.97}{7.00}
\drawpath{22.95}{15.00}{11.03}{7.00}
\drawpath{6.80}{15.00}{22.95}{15.00}
\drawpath{18.97}{7.00}{6.80}{15.00}
\drawpath{27.04}{15.00}{31.02}{7.00}
\drawpath{31.02}{7.00}{35.00}{21.00}
\drawpath{35.00}{21.00}{38.97}{7.00}
\drawpath{38.97}{7.00}{27.04}{15.00}
\drawpath{54.97}{21.00}{47.04}{15.00}
\drawpath{47.04}{15.00}{51.00}{7.00}
\drawpath{51.00}{7.00}{62.91}{15.00}
\drawpath{62.91}{15.00}{54.97}{21.00}
\drawpath{74.97}{21.00}{67.01}{15.00}
\drawpath{67.01}{15.00}{83.04}{15.00}
\drawpath{83.04}{15.00}{78.94}{7.00}
\drawpath{78.94}{7.00}{74.97}{21.00}
\drawpath{94.94}{21.00}{90.98}{7.00}
\drawpath{90.98}{7.00}{99.05}{7.00}
\drawpath{99.05}{7.00}{102.0}{15.00}
\drawpath{102.0}{15.00}{94.94}{21.00}
\end{picture}
\caption{\label{K-5CDC} \scriptsize{A {\rm CDC} of $K_5$ with even circuits.}}
\end{figure}
}\end{proof}
Since the Petersen graph has no $4$-$\rm NZF$~\cite{Zhang'sBook}, we conclude the following Theorem.
\begin{prop}
The Petersen graph is not $2$--simultaneous edge colorable.
\end{prop}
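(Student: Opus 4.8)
The plan is to argue by contraposition: it is classical (and recorded in~\cite{Zhang'sBook}) that the Petersen graph $P$ has no $4$-$\rm NZF$, so it suffices to show that if $P$ were $2$--simultaneous edge colorable then $P$ would admit a $4$-$\rm NZF$. First I would record the relevant features of $P$: it is cubic, $3$-edge-connected (hence bridgeless), and a snark, so $\chi'(P)=4$. Since $P$ is not bipartite, Lemma~\ref{SEOCDC} is not directly available, and I would instead work through the general characterization in Theorem~\ref{SE-CDC}.

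Assume $(f_1,f_2)$ is a $2$--simultaneous edge coloring of $P$ with color set $[l]$. By Theorem~\ref{SE-CDC}, $P$ carries a $\rm CDC$ $\mathcal{C}=\{C_1,\dots,C_l\}$ with $C_j=f_1^{j}\cup f_2^{j}$ a disjoint union of even circuits, satisfying properties (i)--(iii). The point I would stress is that property (iii) is exactly an orientation datum: a proper $2$-edge-coloring of an even circuit is the same as a choice of one of its two orientations, and the requirement that each edge receive different colors in its two circuits says precisely that the two circuits through a given edge traverse it in opposite directions. Hence $\mathcal{C}$ is in fact an $\rm OCDC$ of $P$ all of whose circuits are even. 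The goal is then to feed this structure into Theorem~\ref{thm:4-NZF}.

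The decisive step is to extract a $4$-$\rm NZF$ from $\mathcal{C}$. By Theorem~\ref{thm:4-NZF}(ii)--(iii) it is enough to realize $\mathcal{C}$ as a ($\rm O$)$\rm CDC$ of at most four even cycles; equivalently, since for a cubic graph a $4$-$\rm NZF$ is the same as a proper $3$-edge-coloring, it is enough to extract a $3$-edge-coloring of $P$ from $(f_1,f_2)$. To do this I would exploit the local structure at a vertex $v$: its three incident edges carry three distinct colors in $f_1$ and the same three colors in $f_2$, and because $f_1(e)\ne f_2(e)$ the passage from $f_1$ to $f_2$ restricts to a fixed-point-free permutation of these three colors, i.e.\ to one of the two $3$-cycles. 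This endows each vertex with a canonical ``rotation,'' and the plan is to assemble these rotations, together with the orientations coming from (iii), into a nowhere-zero $\Bbb{Z}_2\times\Bbb{Z}_2$-flow on $P$, which is precisely a $4$-$\rm NZF$.

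The hard part will be exactly this final assembly. The $\rm OCDC$ produced above has one cycle per color, i.e.\ $l=\chi'_{2-SE}(P)$ cycles, and a priori $l>4$; in fact, since $P$ has no $4$-$\rm CDC$ (by Theorem~\ref{thm:4-NZF}(ii)), any such $\mathcal{C}$ must contain at least five circuits, so Theorem~\ref{thm:4-NZF} cannot be invoked verbatim and the circuits genuinely have to be reorganized, or the $3$-edge-coloring built directly, rather than simply read off. Establishing that the even, oriented, vertex-rotation structure of a cubic $2$--simultaneous edge coloring is rigid enough to force a $4$-$\rm NZF$ is therefore the crux of the proof; once it is in place, the contradiction with $P$ having no $4$-$\rm NZF$ closes the argument immediately.
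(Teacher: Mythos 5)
Your reduction to the nonexistence of a $4$-$\rm NZF$ is the right final contradiction, and matches the paper's; the extraction of a $\rm CDC$ of even circuits from Theorem~\ref{SE-CDC} is also correct. But there are two genuine gaps. First, your claim that property (iii) ``is exactly an orientation datum,'' so that $\mathcal{C}$ is in fact an $\rm OCDC$, is unjustified for the Petersen graph: the dictionary between a proper $2$-edge coloring of an even circuit and an orientation of that circuit requires a bipartition (orient color-$1$ edges from $X$ to $Y$ and color-$2$ edges from $Y$ to $X$), which is exactly why Lemma~\ref{SEOCDC} is stated only for bipartite graphs. Without a bipartition there is no canonical way to convert ``the two circuits give each edge different colors'' into ``the two circuits traverse each edge in opposite directions,'' and you would have to prove that a globally consistent choice of circuit orientations exists. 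Second, and more seriously, your argument stops at its own crux: the ``vertex rotation'' assembly of a $4$-$\rm NZF$ is never carried out, and as you yourself note it cannot be a routine application of Theorem~\ref{thm:4-NZF} since the cover may have more than four circuits. As written, the proposal needs the much stronger general claim that every cubic $2$--simultaneous edge colorable graph admits a $4$-$\rm NZF$, which you do not establish.

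What the paper does instead, and what your argument is missing, is a short counting step exploiting the specific circuit spectrum of the Petersen graph $P$: its only even circuits are $C_6$ and $C_8$, each class $f_1^j\cup f_2^j$ is a single even circuit (two disjoint even circuits would need at least $12$ of the $10$ vertices), and $2|E(P)|=30$, so the only possible even-circuit $\rm CDC$s consist of three $C_8$'s plus one $C_6$, or of five $C_6$'s. The first is a $4$-$\rm CDC$ and hence yields a $4$-$\rm NZF$ by Theorem~\ref{thm:4-NZF}(ii), contradicting the fact that $P$ has none (equivalently, your own correct observation that $P$ has no $4$-$\rm CDC$ kills this case instantly); the second is eliminated by a direct check that no five $6$-circuits of $P$ cover every edge exactly twice. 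Note that this uses only the unoriented $\rm CDC$ structure from Theorem~\ref{SE-CDC}, so the problematic $\rm OCDC$ step in your proposal is not needed at all. Your proposal never exploits the constraint on even circuit lengths in $P$, and that constraint is precisely what lets the paper's proof close without any general rigidity theorem.
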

\begin{proof}{
By the contrary,
let the Petersen graph, $P$, is $2$--simultaneous edge colorable.
Let $(f_1,f_2)$ be a $2$--simultaneous edge coloring of $P$ and $f_i^j=\{e\in E(G) : f_i(e)=j\}$, $i=1,2$.
Thus, $P$ has a $\rm CDC$ of even cicuits.
Since $C_6$ and $C_8$  are
only even circuits in $P$ and $|E(P)|=15$, without loss of generality, the induced subgraph by $f_1^j\cup f_2^j$ is a circuit of length $8$ for $j=1,2,3$
and the induced subgraph by $f_1^4\cup f_2^4$ is a circuit of length $6$ or the induced subgraph by $f_1^j\cup f_2^j$ is a circuit of length $6$ for $j=1,\ldots,5$. It is easy to check that the second case is not possible.
 In the first case,
$\mathcal{C}=\{C_j=f_1^j\cup f_2^j\ :\ 1\le j\le4\}$ is a $4$-$\rm CDC$ of $P$.
Therefore by Theorem~\ref{thm:4-NZF}~(ii), $P$ admits a $4$-$\rm NZF$, while it has not~\cite{Zhang'sBook}.
}\end{proof}

\section*{Acknowledgments}
The authors thank Professor C.-Q. Zhang
for his  helpful suggestions and also, they   appreciate
 the help of Amir Hooshang Hosseinpoor
 for his computer programming.


\end{document}